\documentclass[reqno]{amsart}

\usepackage{amsmath,amssymb,amsthm}
\setlength\parindent{0pt}

\usepackage{graphicx,tikz}
\newtheorem{theorem}{Theorem}

\newtheorem{proposition}{Proposition}

\newtheorem*{definition}{Definition}

\theoremstyle{remark}

\DeclareMathOperator{\vol}{vol}

\DeclareMathOperator{\sgn}{sign}

\DeclareMathOperator{\am}{\amalg}

\begin{document}

\title[]{Quantum entanglement and the \\ Growth of Laplacian Eigenfunctions}
\keywords{Laplacian Eigenfunction, Growth, Concentration}
\subjclass[2020]{35B50, 35J05, 60J65.} 
\author[]{Stefan Steinerberger}
\address{Department of Mathematics, University of Washington, Seattle, WA 98195, USA}
\email{steinerb@uw.edu}

\begin{abstract} We study the growth of Laplacian eigenfunctions $ -\Delta \phi_k = \lambda_k \phi_k$ on compact manifolds $(M,g)$.  H\"ormander proved sharp polynomial bounds on $\| \phi_k\|_{L^{\infty}}$ which are attained on the sphere. On a `generic' manifold, the behavior seems to be different: both numerics and Berry's random wave model suggest $\| \phi_k\|_{L^{\infty}} \lesssim \sqrt{\log{\lambda_k}}$ as the  typical behavior.  We propose a mechanism, centered around an $L^1-$analogue of the spectral projector, for explaining the slow growth in the generic case: for $\phi_{n+1}(x_0)$ to be large, it is necessary that either (1) several of the first $n$ eigenfunctions were large in $x_0$ or (2) that $\phi_{n+1}$ is strongly correlated with a suitable linear combination of the first $n$ eigenfunctions on most of the manifold or (3) both.  An interesting byproduct is quantum entanglement for Laplacian eigenfunctions: the existence of two distinct points $x,y \in M$ such that the sequences $(\phi_k(x))_{k=1}^{\infty}$ and $(\phi_k(y))_{k=1}^{\infty}$ do \textit{not} behave like independent random variables. The existence of such points is not to be expected for generic manifolds but common for the classical manifolds and subtly intertwined with eigenfunction concentration.\end{abstract}

\maketitle

\section{Introduction}
Let $(M,g)$ be a compact $d-$dimensional manifold normalized to $\vol(M) = 1$. We consider the sequence of Laplacian eigenfunctions 
\begin{equation}
 -\Delta \phi_k = \lambda_k \phi_k.
 \end{equation}
Throughout the paper it is assumed that they are $L^2-$normalized, i.e. $\| \phi_k\|_{L^2} = 1$. One of the most basic questions is to understand how much these eigenfunctions can concentrate in a point:  to understand the behavior of $\| \phi_k\|_{L^{\infty}}$. A classic result of H\"ormander \cite{h} (see also 
 Avakumovic \cite{ava}, Grieser \cite{grieser}, Levitan \cite{levitan}, Sogge \cite{sogge2002}) is
\begin{equation}
 \| \phi_k\|_{L^{\infty}} \lesssim_{(M,g)} \lambda_k^{\frac{d-1}{4}},
 \end{equation}
 where $A \lesssim B$ denotes the existence of a constant $c>0$ such that $A \leq cB$.
This inequality is sharp and attained for the $d-$dimensional sphere $\mathbb{S}^d$. 
One way of seeing this is via local Weyl laws (see \cite{h}): for any $x \in M$, we have
\begin{equation}
 \sum_{k=1}^{n} \phi_k(x)^2 = n + \mathcal{O}(n^{\frac{d-1}{d}}).
 \end{equation}

The sum under consideration is the diagonal of the spectral projector 
$$ \Pi(x,y) = \sum_{k=1}^{n} \phi_k(x) \phi_k(y).$$
 We also have, for $x \neq y$, that
$$ \left| \Pi(x,y) \right| =  \left| \sum_{k=1}^{n} \phi_k(x) \phi_k(y) \right| \lesssim  n^{\frac{d-1}{d}}.$$
At this point it seems like the problem has been completely resolved: the maximal rate of growth of $\| \phi_k\|_{L^{\infty}}$
is polynomial and attained on the sphere $\mathbb{S}^d$. However, it seems as if the behavior on the sphere is
actually a very exceptional case. If we take some `generic' manifold (say, without any symmetries or an arbitrary domain subjected to a generic diffeomorphism), then numerical
experiments indicate that $\| \phi_k\|_{L^{\infty}}$ tends to grow only very slowly (see e.g. \cite{aurich}).  It is assumed that the growth is perhaps only logarithmic: a guess sometimes mentioned is
$ \| \phi_k\|_{L^{\infty}} \lesssim \sqrt{\log \lambda_k}.$
In contrast, on manifolds on which the eigenvalue problem is explicitly solvable, we frequently encounter
eigenfunction growth. On the $d-$dimensional torus $\mathbb{T}^d$ and $d \geq 5$, classical results from number theory imply
$$ \| \phi_k\|_{L^{\infty}} \lesssim \lambda_k^{\frac{d-2}{4}}$$
and this bound is best possible (see Bourgain \cite{bour}). Bourgain \cite{bour2} also proved that the flat metric on $\mathbb{T}^2$ can be
perturbed to yield a sequence of eigenfunctions $-\Delta \phi_k = \lambda_k \phi_k$ such that $\lambda_k = k^2 + \mathcal{O}(k)$ and $\| \phi_k\|_{L^{\infty}} \gtrsim \lambda_k^{1/8}$. A similar such exceptional sequence on an arithmetic hyperbolic manifold was constructed by Rudnick \& Sarnak \cite{rudnick}.
In the converse direction, there have been several results regarding conditions under which H\"ormander's estimate can be improved, we refer to work of B\'erard \cite{ber},  Galkowski \& Toth \cite{gal}, Hassell \& Tacy \cite{ha}, Hezari \& Rivi\`ere \cite{hezari}, Sogge \cite{sogge0, sogge00, sogge1}, Sogge \& Zelditch \cite{sogge, soggez, soggez2} and Xi \& Zhang \cite{xi}. A common theme of these results is that, under some geometric conditions excluding at least the sphere, they obtain a logarithmic improvement.\\

\textbf{The Random Wave Model.} A prediction of Berry \cite{berry} is that eigenfunctions should locally (say, on the scale of a few wavelengths) behave like a superposition of random waves, their behavior should not be too different from
$$ f(x) = \sqrt{\frac{2}{\mbox{vol}(M)}} \frac{1}{\sqrt{N}} \sum_{n=1}^{N} a_n \cos \left( \left\langle k_n, x \right\rangle + \varepsilon_n \right),$$
where $a_n$ are independent Gaussians, the $\varepsilon_n$ are uniformly distributed in $[0,2\pi]$ and the vectors $k_n$ are chosen uniformly from the sphere with radius $\| k_n\| = \sqrt{\lambda}$. This proposal has received a lot of attention and leads to predictions of local behavior
that match numerical results -- locally, in the generic setting, Laplacian eigenfunctions seem to behave like random waves. This can be used to predict the `typical' $L^{\infty}-$norm: the global maximum should simply be the largest of a polynomial number of waves: the maximum of $m$ standard $\mathcal{N}(0,1)-$gaussians scales like $\sim \sqrt{\log{m}}$.
Thus, on manifolds where eigenfunctions are well captured by the random wave model, the asymptotic behavior is perhaps given by 
$$\| \phi_k\|_{L^{\infty}} \lesssim \sqrt{\log \lambda_k}.$$ In the other direction, Toth \& Zelditch \cite{toth} have established that a uniform bound $\| \phi_k\|_{L^{\infty}} \lesssim 1$ requires (under some assumptions) the manifold $(M,g)$ to be flat and thus one would perhaps not expect this to be generic.

\section{Statements and Results} 
\subsection{Introducing $\am$} The aim of this paper is to introduce, $\am$, an object related to the spectral projector $\Pi$ but rougher. We will argue that it has interesting properties and that these properties can be used to study the growth of eigenfunctions. Given the first $n$ eigenfunctions $\phi_1, \dots, \phi_n$, we define $\am:M \times M \rightarrow \mathbb{R}$ via
\begin{equation}
 \am^{(n)}(x,y) = \sum_{k=1}^n \sgn(\phi_k(x)) \phi_k(y),
 \end{equation}
where 
$$ \sgn(z) = \begin{cases} 1 \qquad &\mbox{if}~z > 0 \\
0 \qquad &\mbox{if}~z =0 \\
 -1 \qquad &\mbox{if}~z < 0. \end{cases}$$
 Observe that the eigenfunctions $\phi_1, \dots, \phi_n$ are only defined up to a global sign: if $\phi_k$ is an eigenfunction, then $-\phi_k$ is just as good an eigenfunction as $\phi_k$ is. $\am^{(n)}$ is \textit{invariant} under these changes of signs.
$\am^{(n)}$ depends on the number $n$ of eigenfunctions being used. We will often suppress this dependence in the notation for simplicity of exposition and  write $\am^{(n)}(x,y)$ when trying to emphasize the dependence on $n$. $\am(x,y)$ is a much rougher function than $\Pi$: in particular, it is discontinuous in the first variable. However, for fixed $x_0 \in M$, the function $\am^{(n)}(x_0, y)$ is merely a sum of eigenfunctions since
 $$ \am^{(n)}(x_0,y) = \varepsilon_1 \phi_1(y) + \varepsilon_2 \phi_2(y) + \dots + \varepsilon_n \phi_n(y) \qquad \mbox{where} ~\varepsilon_i \in \left\{-1,0,1\right\}$$
 is chosen in such a way that $\varepsilon_i \phi_i(x) \geq 0$ for all $1 \leq i \leq n$ in the point $x \in M$. 
 One way of thinking about $\am^{(n)}(x,y)$ is that its the sum of the first $n$ eigenfunctions where each of these eigenfunctions has had its sign flipped to ensure that it is positive in $x$ (while removing all eigenfunctions that vanish in $x$).
 
 \begin{center}
 \begin{figure}[h!]
 \begin{tikzpicture}
 \node at (0,0) {\includegraphics[width =0.45\textwidth]{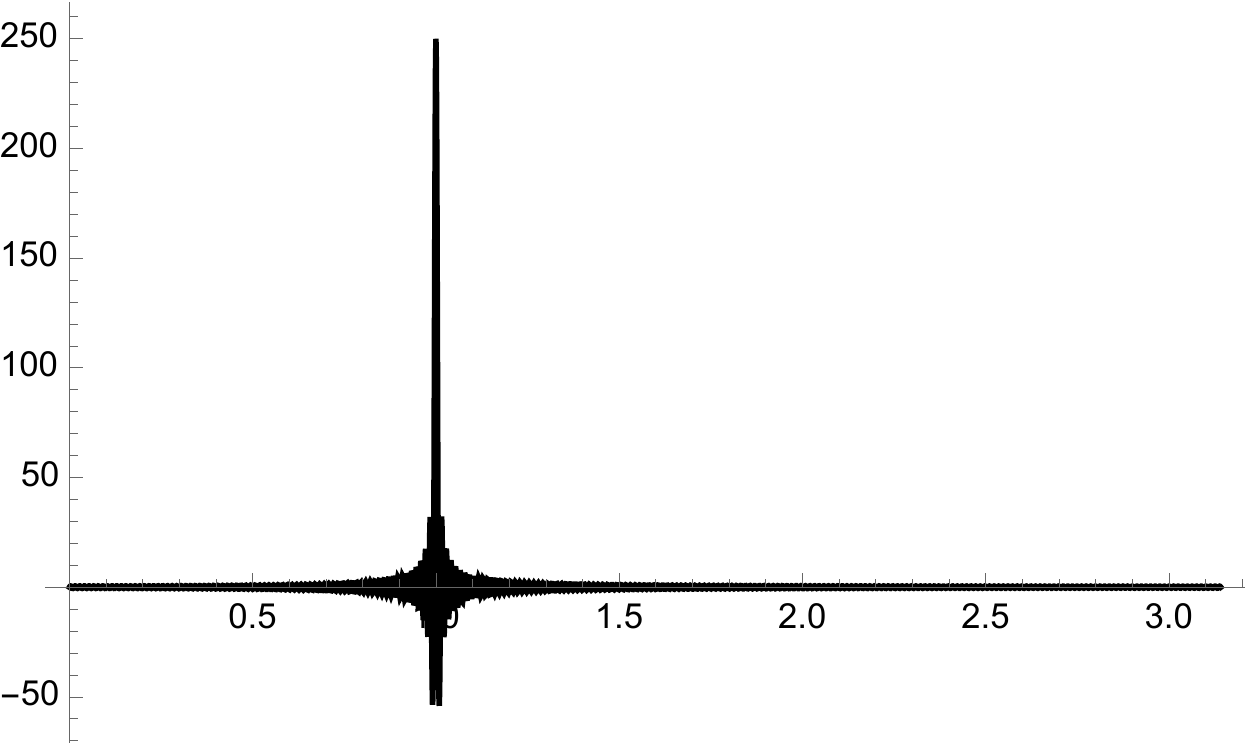}};
 \node at (1,1) {$\Pi(1, y)$};
  \node at (6,0) {\includegraphics[width =0.45\textwidth]{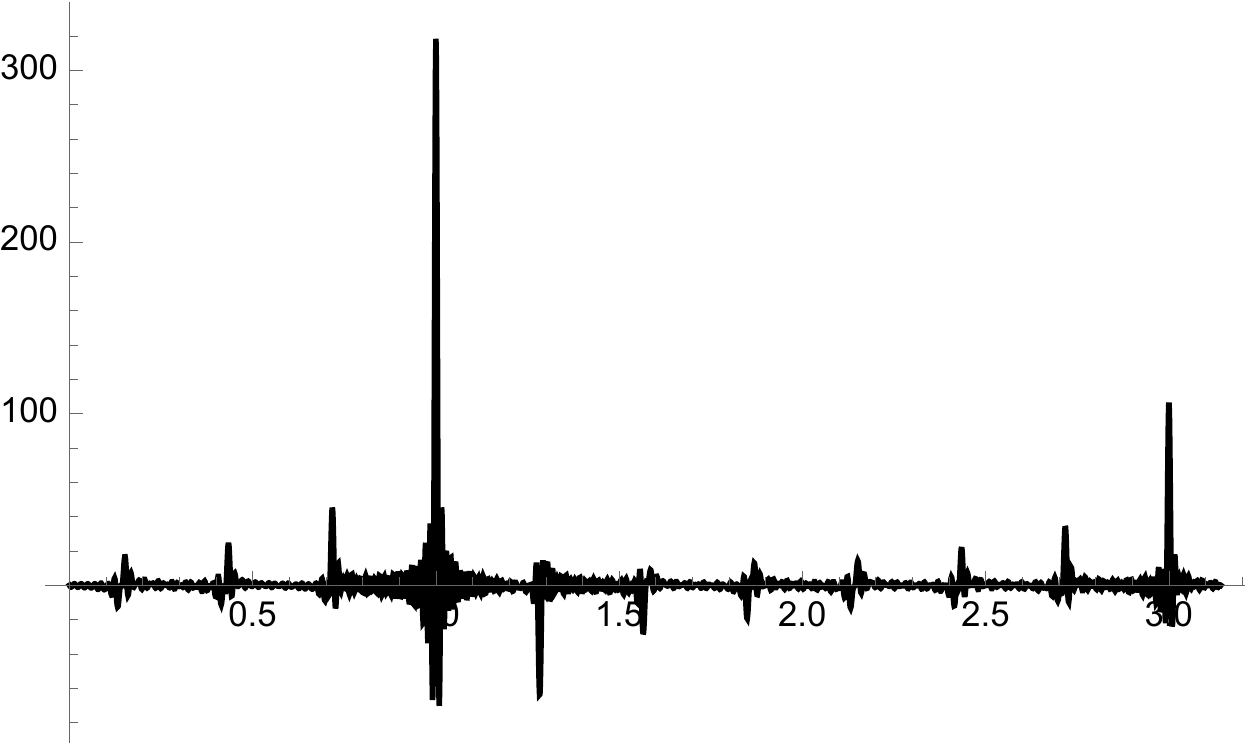}};
   \node at (7,1) {$\am(1, y)$};
 \end{tikzpicture}
 \caption{An example: the first 500 eigenfunctions of the Laplacian on $[0, \pi]$ with Dirichlet boundary conditions. The spectral projector $\Pi(1,y)$ is tightly concentrated around 1. $\am(1,y)$ has some concentration around 1 but also in other places (see \S 5.2).}
 \end{figure}
 \end{center}
 
 We start with a number of basic facts. A direct consequence of the orthogonality and $L^2-$normalization of the eigenfunction immediately implies that for most pairs of points $x,y \in M$ we expect an inequality of the type $|\am^{(n)}(x,y)| \lesssim \sqrt{n}$.
 \begin{proposition}
 We have
\begin{align}
\forall x \in M \qquad \int_M \am^{(n)}(x,y)^2 dy &\leq n \qquad \mbox{and} \qquad \int_M\int_M \am^{(n)}(x,y)^2 dy = n. \label{eq:up}
\end{align}
 \end{proposition}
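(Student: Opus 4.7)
The plan is to exploit the structure of $\am^{(n)}(x_0,y)$ as a function of $y$ alone once $x_0$ is fixed, namely that it is a linear combination of orthonormal eigenfunctions with coefficients in $\{-1,0,1\}$, and then invoke Parseval.

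Fix $x_0 \in M$ and set $\varepsilon_k = \sgn(\phi_k(x_0)) \in \{-1,0,1\}$, so that
$$ \am^{(n)}(x_0,y) = \sum_{k=1}^n \varepsilon_k \phi_k(y). $$
Since $\{\phi_k\}_{k=1}^{\infty}$ is orthonormal in $L^2(M)$, Parseval's identity immediately yields
$$ \int_M \am^{(n)}(x_0,y)^2 \, dy = \sum_{k=1}^n \varepsilon_k^2 \leq n, $$
because $\varepsilon_k^2 \in \{0,1\}$. This gives the first inequality in \eqref{eq:up} for every $x_0 \in M$.

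For the double integral, I would integrate the identity $\int_M \am^{(n)}(x,y)^2\, dy = \sum_{k=1}^n \sgn(\phi_k(x))^2$ in the variable $x$ and swap sum and integral:
$$ \int_M \int_M \am^{(n)}(x,y)^2 \, dy\, dx = \sum_{k=1}^n \int_M \sgn(\phi_k(x))^2 \, dx = \sum_{k=1}^n \vol\bigl(\{x \in M : \phi_k(x) \neq 0\}\bigr). $$
Each nodal set $\{\phi_k = 0\}$ is the zero set of a real-analytic (in the interior) Laplace eigenfunction on a compact manifold, hence has $d$-dimensional Hausdorff measure zero (in fact it is a smooth hypersurface away from a lower-dimensional singular stratum, by Cheng's classical nodal set theorem). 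Combined with the normalization $\vol(M)=1$, each term in the sum equals $1$ and the total is $n$.

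There is no real obstacle here beyond the standard fact that nodal sets have measure zero; the key conceptual step is simply recognizing that fixing the first coordinate collapses $\am^{(n)}(x_0,\cdot)$ to an orthonormal expansion with unit-modulus (or zero) coefficients, after which both claims are one application of Parseval.
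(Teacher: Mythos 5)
Your proof is correct and follows essentially the same route as the paper: Parseval in the $y$ variable gives $\int_M \am^{(n)}(x,y)^2\,dy = \sum_{k=1}^n \sgn(\phi_k(x))^2 \leq n$, and integrating in $x$ together with the fact that nodal sets have measure zero yields the exact value $n$. The only difference is that you spell out the measure-zero property of nodal sets in more detail than the paper does.
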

We will often be particularly interested in the diagonal behavior which is
 $$ \am^{(n)}(x,x) = \sum_{k=1}^{n} |\phi_k(x)|.$$
One might expect that for $x \in M$ fixed, a typical eigenfunction $\phi_k(x)$ should be of size $\sim 1$ and that we should be
 able to expect $\am(x,x) \sim n$. This is indeed the correct upper bound. However, $\am^{}(x,x)$ can
 be much smaller (exactly in the case where eigenfunctions exhibit large concentration).  
 \begin{proposition}[Diagonal Behavior of $\am$]
Assuming $\vol(M) = 1$, 
\begin{equation} \label{eq:l1l}
\int_{M}  \am^{(n)}(x,x) dx =    \int_{M}  \sum_{k=1}^{n} |\phi_k(x)| dx \leq n.
 \end{equation}
Moreover, for each $x \in M$, we have 
\begin{equation} \label{eq:univlower}
 n^{\frac{d+1}{2d}} \lesssim_{(M,g)} \sum_{k=1}^{n} |\phi_k(x)|  \leq  n + \mathcal{O}(n^{\frac{d-1}{d}}).
 \end{equation}
\end{proposition}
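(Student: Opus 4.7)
The three inequalities reduce to standard facts about $L^2$-normalization, Cauchy-Schwarz, Hörmander's bound, and the local Weyl law already cited in the paper.

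For the integrated bound \eqref{eq:l1l}, I would estimate each summand separately: since $\vol(M) = 1$, Cauchy-Schwarz gives
\begin{equation*}
 \int_M |\phi_k(x)|\, dx \leq \left(\int_M \phi_k(x)^2\, dx\right)^{1/2} \left(\int_M 1\, dx\right)^{1/2} = 1,
\end{equation*}
and summing over $1 \leq k \leq n$ yields the claim. This is the easy part.

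For the pointwise upper bound in \eqref{eq:univlower}, I would again use Cauchy-Schwarz, now on the counting measure over $k$: for fixed $x \in M$,
\begin{equation*}
 \sum_{k=1}^n |\phi_k(x)| \leq \sqrt{n} \left(\sum_{k=1}^n \phi_k(x)^2\right)^{1/2} = \sqrt{n} \cdot \sqrt{n + \mathcal{O}(n^{(d-1)/d})},
\end{equation*}
where the last step uses the local Weyl law stated in the introduction. Expanding the square root in a Taylor series gives $n + \mathcal{O}(n^{(d-1)/d})$.

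The interesting inequality is the pointwise lower bound, and this is where I would expect the most thought to be needed — though in the end it is a clean interpolation argument. The idea is to combine the $\ell^2$ mass supplied by the local Weyl law with the $\ell^\infty$ control supplied by Hörmander. Writing
\begin{equation*}
 n + \mathcal{O}(n^{(d-1)/d}) = \sum_{k=1}^n \phi_k(x)^2 \leq \left(\max_{1 \leq k \leq n} |\phi_k(x)|\right) \sum_{k=1}^n |\phi_k(x)|,
\end{equation*}
it suffices to bound the maximum. Hörmander's inequality gives $|\phi_k(x)| \lesssim_{(M,g)} \lambda_k^{(d-1)/4}$, and Weyl's law $\lambda_k \asymp k^{2/d}$ converts this to $|\phi_k(x)| \lesssim_{(M,g)} k^{(d-1)/(2d)} \leq n^{(d-1)/(2d)}$ for all $k \leq n$. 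Substituting and solving for the $L^1$-type sum gives
\begin{equation*}
 \sum_{k=1}^n |\phi_k(x)| \gtrsim_{(M,g)} \frac{n}{n^{(d-1)/(2d)}} = n^{(d+1)/(2d)},
\end{equation*}
which is exactly the desired lower bound. The main obstacle, such as it is, was recognizing that the Hörmander bound supplies the right $\ell^\infty$ control to interpolate against the local Weyl law; once one writes the trivial inequality $\sum \phi_k^2 \leq \max |\phi_k| \cdot \sum |\phi_k|$ the rest is bookkeeping.
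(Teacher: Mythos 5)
Your proposal is correct and follows essentially the same route as the paper: Cauchy–Schwarz against $\vol(M)=1$ for the integrated bound, Cauchy–Schwarz over $k$ plus the local Weyl law for the pointwise upper bound, and the interpolation $\sum_k \phi_k(x)^2 \leq \bigl(\max_k |\phi_k(x)|\bigr)\sum_k|\phi_k(x)|$ combined with H\"ormander's estimate and Weyl's law for the lower bound. The only cosmetic difference is that the paper bounds $\max_{1\leq k\leq n}\|\phi_k\|_{L^\infty}\lesssim n^{1/2-1/(2d)}$ directly, which is the same exponent as your $n^{(d-1)/(2d)}$.
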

The lower bound in \eqref{eq:univlower} is sharp and attained on the sphere.
We remark that the local Weyl law implies that in the absence of eigenfunction concentration, $\am^{}(x,x)$ is uniformly close to $n$ since
\begin{equation} \label{eq:easylower}
 \min_{x \in M} \am^{(n)}(x,x) \geq \frac{ n - \mathcal{O}(n^{\frac{d-1}{d}})}{\max_{1 \leq k \leq n} \| \phi_k\|_{L^{\infty}}}.
 \end{equation}
 In particular, if in the generic setting, the eigenfunctions can only grow logarithmically, then $\am^{}(x,x)$ is roughly comparable to $n$ up to possibly logarithmic multiplicative factors. 
The $L^2-$normalization also implies an average bound
\begin{equation} \label{eq:easylower2}
 \int_{x \in M} \am^{(n)}(x,x)dx \geq \sum_{k=1}^{n} \frac{1}{\| \phi_k \|_{L^{\infty}}}.
 \end{equation}

We conclude the section with a seemingly natural question regarding \eqref{eq:l1l}: one might perhaps
expect that such an upper bound should have complementary lower bound of nearly comparable quality
but we have been unable to find such a statement in the literature:
for a sequence of Laplacian eigenfunctions $(\phi_k)_{k=1}^{\infty}$ on a compact manifold $(M,g)$, normalized in $L^2$, is there an estimate along the lines of
\begin{equation} \label{lowerL1}
\sum_{k=1}^{n} \| \phi_k\|_{L^{1}} \gtrsim \frac{n}{(\log{n})^{\alpha}}
\end{equation}
for some $\alpha \geq 0$? The random wave model in combination with the prediction of logarithmic growth for $\phi_k$ in combination with \eqref{eq:easylower} would suggest that
such an inequality should be true for some $0 \leq \alpha \leq 1/2$.

\subsection{Spooky Action at a Distance.} We will now discuss what one might expect of $\am^{}(x,y)$. A common theme that will
be illustrated throughout the paper (see also the various figures) is that on manifolds $(M,g)$ where explicit eigenfunction computations are possible, $\am^{}(x,y)$
exhibits a remarkable amount of structure. The purpose of this paper is to introduce and motivate $\am(x,y)$ and its connection to eigenfunction
concentration -- the paper does not contain a systematic study of $\am(x,y)$ on specific manifolds beyond a few basic examples (see Theorem 1); such a study might be interesting and already nontrivial on, say, $\mathbb{S}^1$ or $\mathbb{T}^2$.
As can be seen in Fig. 1 and Fig. 2, $\am(x,y)$ can have significant nonlocal behavior. A first guess is that there is off-diagonal decay: recalling \eqref{eq:up}
$$\int_M\int_M \am^{(n)}(x,y)^2 dy = n$$ 
one would expect that if $x \neq y$ and $n \in \mathbb{N}$, then
$$ \left| \am^{(n)}(x,y) \right| \lesssim \sqrt{n}.$$
Since $\am(x,y)$ fixes the signs at $x$, one would perhaps expect that the signs at a point $y \neq x$ are completely decoupled and should be independent; the law of iterated logarithm would then suggest the following optimistic estimate 
$$ \left| \am^{(n)}(x,y) \right| \lesssim_{x,y} \sqrt{n \log \log n}.$$
However, this fails \textit{dramatically} for most of the classical examples. Most of them seem to have points $x \neq y$ such that $|\am^{(n)}(x,y)|$ is dramatically larger for infinitely many $n$ -- this is what we call spooky action at a distance. For the formal definition, we will only work up to powers of logarithms.

 \begin{center}
 \begin{figure}[h!]
 \begin{tikzpicture}
 \node at (0,0) {\includegraphics[width=0.35\textwidth]{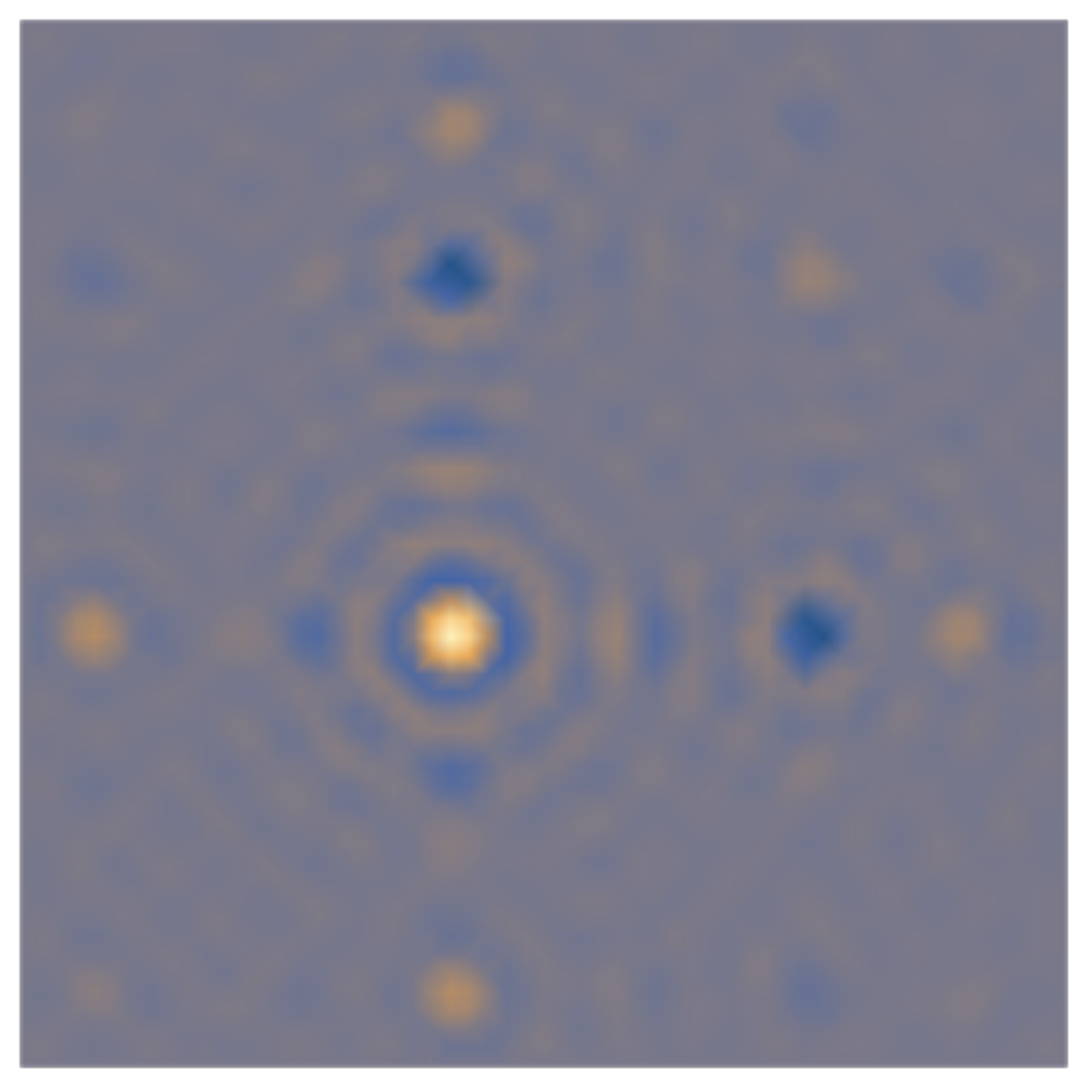}};
  \node at (6,0) {\includegraphics[width=0.35\textwidth]{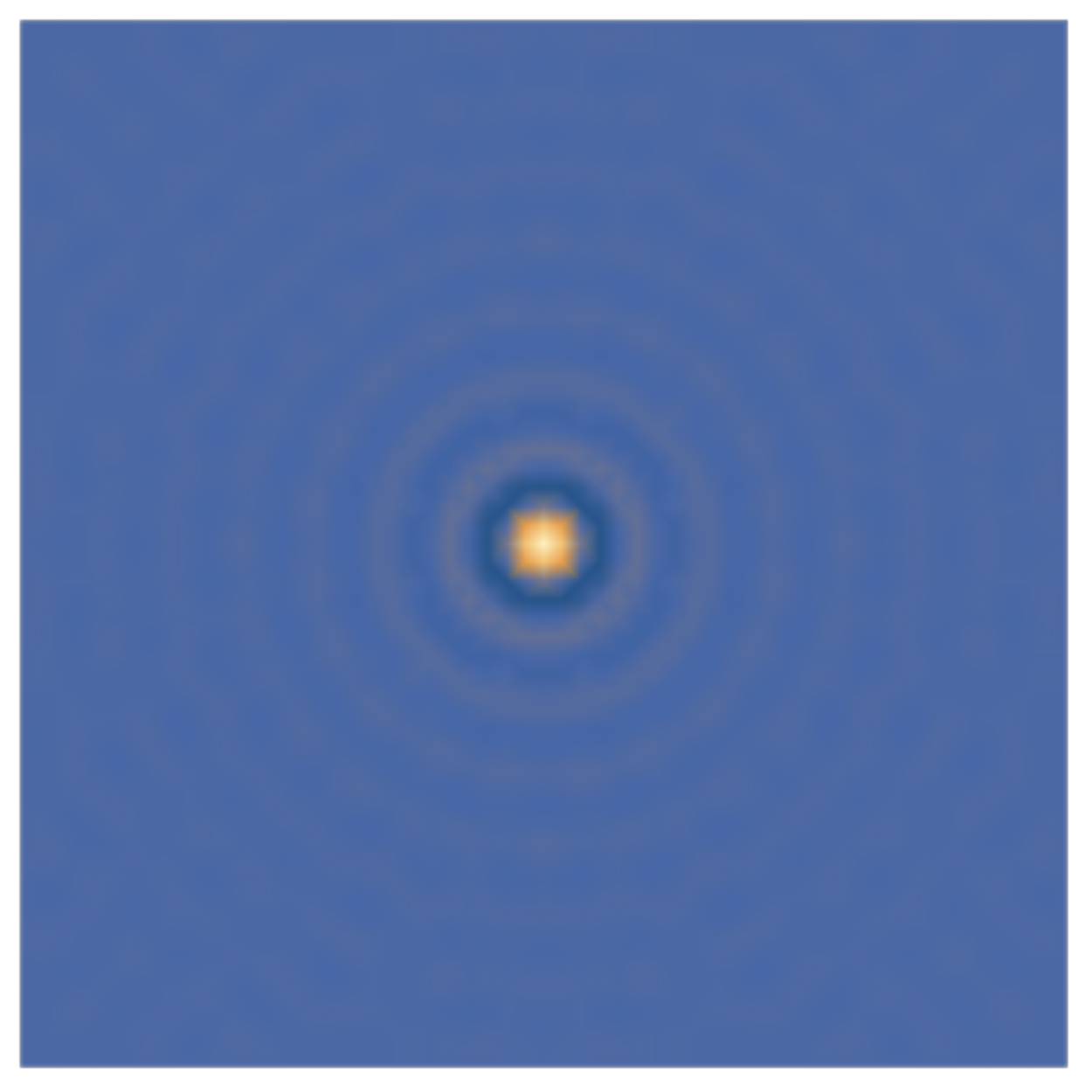}};
 \end{tikzpicture}
 \caption{Square $[0,\pi]^2$ with Dirichlet boundary conditions: $\am^{(675)}((1.3, 1.3),(x,y))$ (left) and $\am^{(675)}((\pi/2, \pi/2),(x,y))$ (right).}
 \end{figure}
 \end{center}
\vspace{-10pt}

\begin{definition} A basis $(\phi_{n})_{n=1}^{\infty}$ of Laplacian eigenfunctions does \underline{not} exhibit \emph{spooky action at a distance} if, for a constant $c$ and all $x,y \in M$ and all $n \in \mathbb{N}_{\geq 2}$
$$\left| \am^{(n)}(x,y) \right| \lesssim_{x,y} (\log{n})^c \sqrt{n}.$$
We say that it exhibits \emph{strong spooky action at a distance} if there exists $x \neq y$ with
$$ \limsup_{n \rightarrow \infty}  \frac{1}{n} \left| \am^{(n)}(x,y) \right| > 0.$$
\end{definition}

 On manifolds where eigenvalues have multiplicity (say $\mathbb{S}^2$), there are infinitely many choices of bases and some might exhibit spooky action at a distance while others do not. We will show (Theorem 1) that the standard basis on $\mathbb{S}^1$ exhibits spooky action but that a suitable randomization of the standard basis on $\mathbb{S}^1$ does not.
This definition is somewhat complementary to the random wave heuristic (which is purely local): the definition quantifies a certain lack of independence exhibited by Laplacian eigenfunctions when restricted to two different points (see \S 3.4 for a precise random model). A phenomenon related to spooky action at a distance, unexpected sign correlations at a distance, was already observed for one-dimensional Sturm-Liouville problems by Goncalves, Oliveira e Silva and the author \cite{stef}

\subsection{Spooky Action for Classical Examples} Most of the examples where computations can be done in closed form seem to exhibit \textit{strong} spooky action at a distance. This is interesting because it means that there are distinguished pairs of points $(x,y) \in M \times M$ such that a typical eigenfunction $\phi_k$ when evaluated in the point $x$ seems to care a great deal about its value in a far away point $y$. Equation \eqref{eq:up} implies that the set of such points is at most a set of measure 0 on $M \times M$.
\begin{theorem}
 $[0,1]$ (with either Dirichlet or Neumann boundary conditions) and the circle $\mathbb{S}^1$ exhibit \emph{strong} spooky action at a distance. A random basis of eigenfunctions on the circle $\mathbb{S}^1$ does not have spooky action at a distance (almost surely).
\end{theorem}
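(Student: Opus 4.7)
The plan splits into (a) exhibiting explicit pairs of points witnessing strong spooky action for the three deterministic cases, and (b) a second-moment-plus-Borel--Cantelli argument for the random basis on $\mathbb{S}^1$.

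For $[0,1]$ with Dirichlet conditions, where $\phi_k(x) = \sqrt{2}\sin(k\pi x)$, I would exploit the reflection symmetry $\phi_k(1-x) = (-1)^{k+1}\phi_k(x)$, which collapses the anti-diagonal of $\am^{(n)}$ to
\[
\am^{(n)}(x,1-x) = \sqrt{2}\sum_{k=1}^{n}(-1)^{k+1}|\sin(k\pi x)|.
\]
Taking $x=1/4$ and computing $|\sin(k\pi/4)|$ over its period $8$ in $k$ gives $\am^{(n)}(1/4,3/4)/n \to (2-\sqrt{2})/4 > 0$. The Neumann case is identical, with $\phi_k(1-x) = (-1)^k\phi_k(x)$ for $\phi_k=\sqrt{2}\cos(k\pi x)$. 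For the standard basis on $\mathbb{S}^1$ the involution $x\mapsto -x$, together with $\cos$ being even and $\sin$ being odd, yields
\[
\am^{(n)}(x,-x) = 1 + \sqrt{2}\sum_{k=1}^{K}\bigl[|\cos(2\pi kx)| - |\sin(2\pi kx)|\bigr],
\]
and choosing $x=1/6$ produces a period-$6$ average of $(2-\sqrt{3})/3 > 0$, again witnessing linear-in-$n$ growth.

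For the random basis, I would model the randomness by drawing, independently for each $k\geq 1$, an angle $\theta_k$ uniform on $[0,2\pi)$ and letting the level-$k$ eigenspace be spanned by $\psi_k^{1}(x)=\sqrt{2}\cos(2\pi kx-\theta_k)$ and $\psi_k^{2}(x)=\sqrt{2}\sin(2\pi kx-\theta_k)$. Fix $x\neq y$ and write $\alpha_k=2\pi kx-\theta_k$ (uniform on $[0,2\pi)$) and $\gamma_k=2\pi k(y-x)$ (deterministic). Expanding the per-level contribution $T_k = \sgn(\psi_k^{1}(x))\psi_k^{1}(y) + \sgn(\psi_k^{2}(x))\psi_k^{2}(y)$ via angle-addition gives
\[
T_k/\sqrt{2} = \cos\gamma_k\bigl(|\cos\alpha_k|+|\sin\alpha_k|\bigr) + \sin\gamma_k\bigl(\sgn(\sin\alpha_k)\cos\alpha_k - \sgn(\cos\alpha_k)\sin\alpha_k\bigr),
\]
so integrating yields $\mathbb{E}[T_k] = (4\sqrt{2}/\pi)\cos\gamma_k$, whose partial sums form a Dirichlet kernel bounded by $(4\sqrt{2}/\pi)/|\sin(\pi(y-x))|$ uniformly in $K$. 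Since the $T_k$ are independent and bounded by $2\sqrt{2}$, Hoeffding's inequality plus Borel--Cantelli gives $|\am^{(n)}(x,y)|\lesssim_{x,y}\sqrt{n\log n}$ almost surely, for each fixed pair $(x,y)$.

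The main obstacle is promoting this pointwise almost-sure bound to an almost-sure bound valid simultaneously at every pair $(x,y)$, as the definition of ``no spooky action'' demands. I would proceed by a chaining argument: for each $n$, apply the Hoeffding tail estimate together with a union bound over a grid $N_n \subset \mathbb{S}^1\times\mathbb{S}^1$ of spacing $n^{-3}$ and size $O(n^6)$; choosing the confidence level so that the resulting tail probabilities are summable in $n$, Borel--Cantelli delivers the $\sqrt{n\log n}$ bound uniformly on $N_n$ for all large $n$ almost surely. An arbitrary $(x,y)$ differs from its nearest net point by a function that is piecewise constant in $x$ with jumps of magnitude at most $2\sqrt{2}$ at zeros of the $\psi_k^{j}$ ($k\leq n$), and with total jump count $O(n^2\cdot n^{-3}) = O(n^{-1})$, which is negligible against $\sqrt{n\log n}$. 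The expectation and variance computation itself is elementary once the random-phase parametrization is installed; this final uniformization is the delicate step.
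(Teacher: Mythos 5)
Your argument is essentially correct but follows a genuinely different route from the paper's on the deterministic side. The paper establishes strong spooky action on $\mathbb{S}^1$ by evaluating at the pair $(2\pi/3,0)$, where the sine terms vanish and $\sum_k \sgn(\cos(2\pi k/3))$ is explicitly $-n/3+\mathcal{O}(1)$; for the Dirichlet interval it picks $x$ with $x/\pi$ irrational and uses equidistribution of $(kx \bmod 2\pi)$ to show $\am^{(n)}(x,3x)\sim \tfrac{2n}{3\pi}$. Your reflection-symmetry identities $\am^{(n)}(x,1-x)=\sqrt{2}\sum_k(-1)^{k+1}|\sin(k\pi x)|$ and $\am^{(n)}(x,-x)=1+\sqrt{2}\sum_k\bigl[|\cos|-|\sin|\bigr]$ collapse the off-diagonal sum to a one-point computation and let you evaluate exactly at rational points; this is arguably cleaner and avoids Weyl equidistribution, at the cost of being tied to the reflection symmetry (the paper's equidistribution argument generalizes to arbitrary irrational $x$ and the pair $(x,3x)$). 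Both approaches are valid; your period computations $(2-\sqrt{2})/4$ and $(2-\sqrt{3})/3$ check out, and the witnessing pairs are distinct points as required. For the random basis your model (i.i.d.\ uniform phase per eigenspace) coincides with the paper's, and the mean computation $\mathbb{E}[T_k]=\tfrac{4\sqrt{2}}{\pi}\cos\gamma_k$ with Dirichlet-kernel partial sums matches the paper's identity $\mathbb{E}\,\am^{(2n)}(x,y)=8\sum_k\cos(k(x-y))$. Your Hoeffding-plus-Borel--Cantelli step actually aims at something \emph{stronger} than what the paper proves: the paper only establishes convergence in probability for fixed $x$ (maximizing over $y$ via a derivative bound and a net of $\sim n^{3/2}$ points), whereas you target an almost-sure statement uniform in both variables.

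One quantitative step in your uniformization needs repair. The claim that an arbitrary $x$ differs from its nearest net point by ``total jump count $O(n^2\cdot n^{-3})=O(n^{-1})$'' is an \emph{average} count, not a worst-case one: a specific gap of length $n^{-3}$ can contain up to one zero of each of the $2n$ functions $\psi_k^j$ (their zeros are spaced $\geq 1/(2n)\gg n^{-3}$ apart), so the worst-case change in $\am^{(n)}(\cdot,y)$ across a single gap is $O(n)$, which is not negligible against $\sqrt{n\log n}$. This is fixable: the number of zeros falling in a fixed gap is a sum of independent indicators with total mean $O(n^{-1})$, so a Chernoff bound plus a union bound over the $O(n^3)$ gaps shows that almost surely every gap contains $O(\log n)$ sign changes, each contributing a jump of size at most $2\sqrt{2}$; the resulting $O(\log n)$ discrepancy is then harmless. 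Alternatively, one can restrict attention (as the paper implicitly does) to pairs with $|x-y|$ bounded below and note that the definition's constant is allowed to depend on $(x,y)$, so a countable exhaustion over $\varepsilon$-separated pairs suffices. You should also note explicitly that the definition must exclude $y=x$ (where $\am^{(n)}(x,x)\sim n$ for any basis in dimension one), which is why the fixed-pair formulation with an $(x,y)$-dependent constant is the right target.
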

The list of examples could undoubtedly be continued and a better of understanding of $\am(x,y)$ for specific explicit manifolds $(M,g)$ might be of interest.  A recurring theme in the study of eigenfunction concentration is that most of
the examples of manifolds $(M,g)$ where explicit computations are possible do also exhibit eigenfunction concentration -- as has been
argued, the very fact that explicit computations are possible is due to the presence of additional structure in the manifold $(M,g)$ making
them atypical. We believe that spooky action at a distance is another such consequence and not to be expected on a `generic' manifold.
\vspace{-50pt}
 \begin{center}
 \begin{figure}[h!]
 \begin{tikzpicture}
 \node at (0,0) {\includegraphics[width=0.6\textwidth]{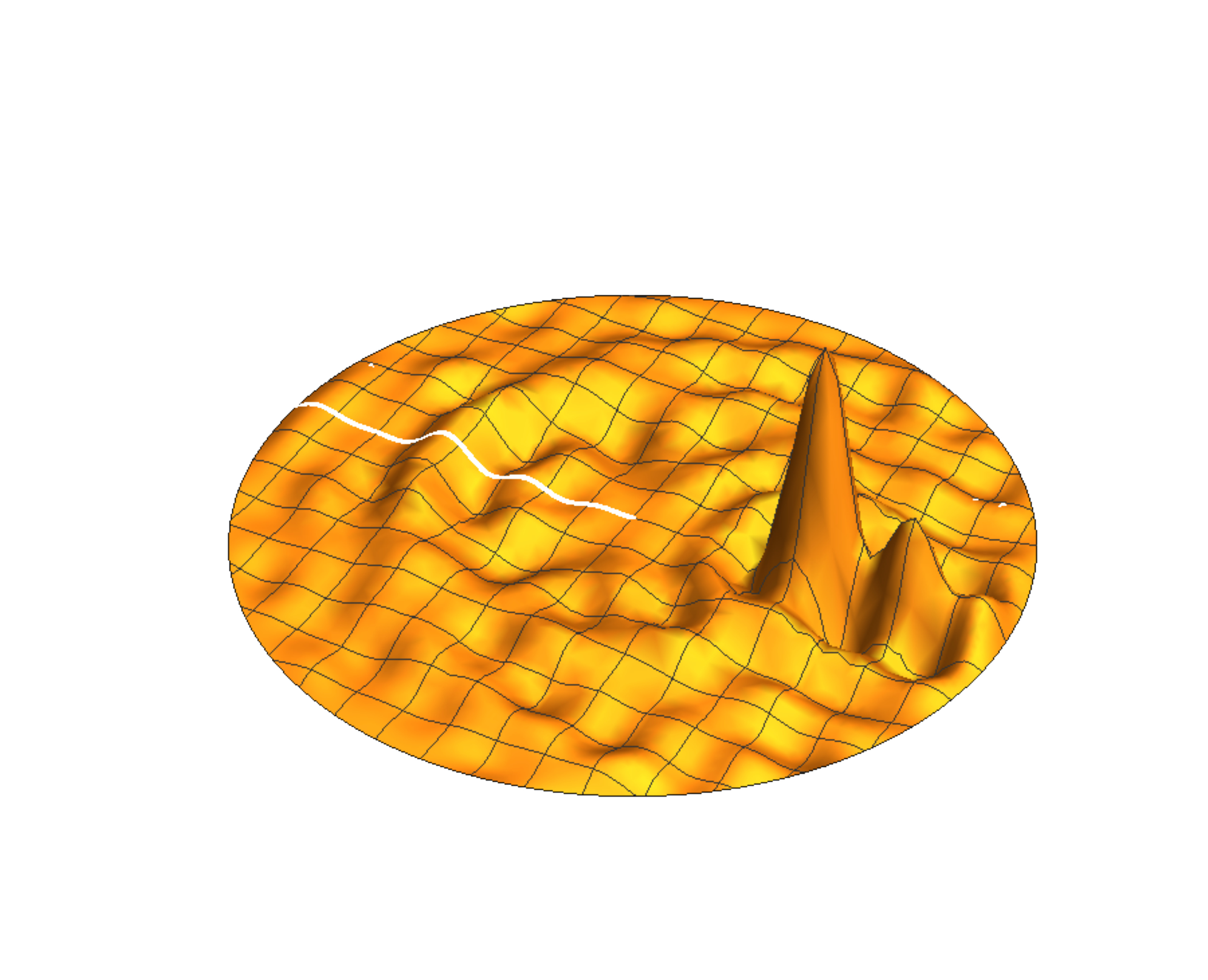}};
  \node at (5.5,0) {\includegraphics[width=0.35\textwidth]{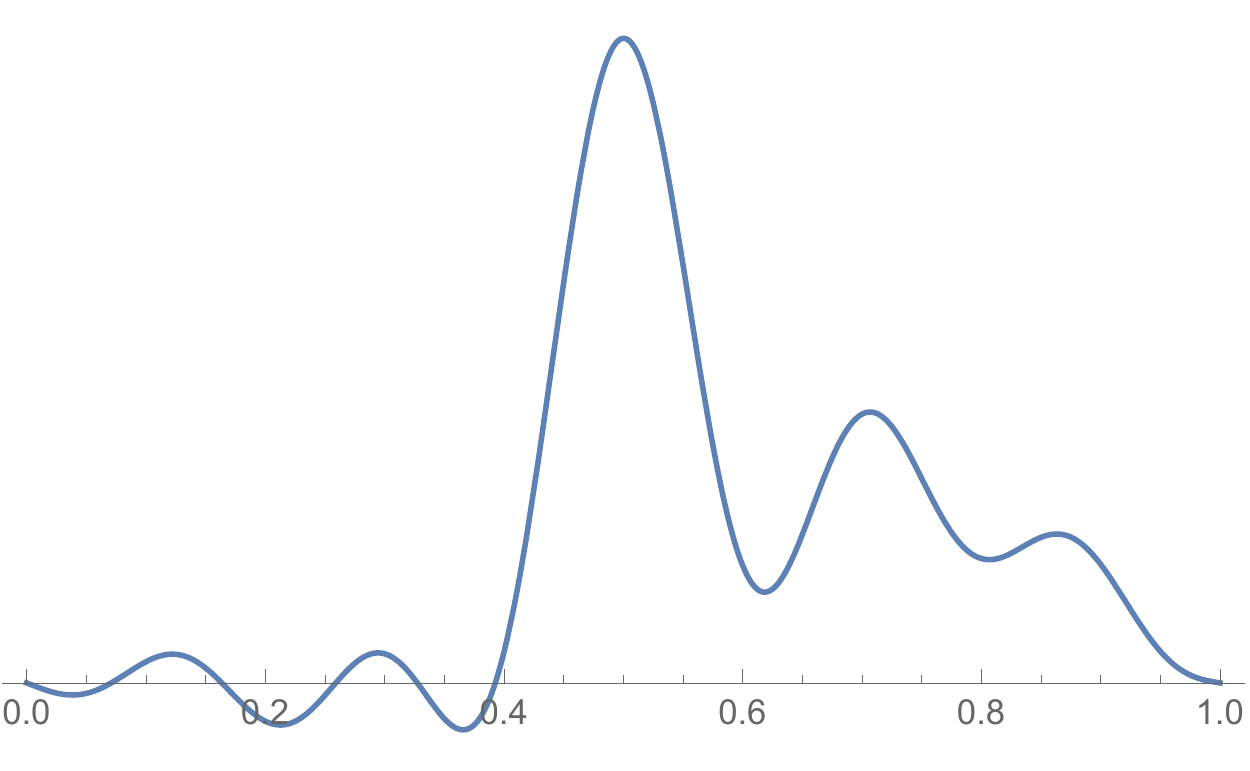}};
 \end{tikzpicture}
 \vspace{-35pt}
 \caption{Spooky Action at a Distance on the unit disk: $\am^{(120)}((0.5, 0),(x,y))$ (left) and $\am^{(120)}((0.5, 0),(x,0))$ (right).}
 \label{fig:bessel}
 \end{figure}
 \end{center}

\subsection{Interlude: $\am$ close to the diagonal.} We quickly present a problem that is interesting in itself (and will also play a role in the proof of Theorem 2). Suppose $x_0 \in M$ and $n \in \mathbb{N}$ are fixed. Where does the function (of $y$)
$$ \am^{(n)}(x_0,y) =   \sgn(\phi_1(x_0)) \phi_1(y) + \dots +  \sgn(\phi_n(x_0)) \phi_n(y) $$
assume its maximal value? Naturally, since the signs are all favorably aligned around $x_0$, one would assume that the function is presumably largest somewhere close to that point. There is no reason to assume that it should be exactly in $x_0$ but possibly somewhere nearby. Recall from \eqref{eq:univlower} that
$$  \am(x,x) = \sum_{k=1}^{n} |\phi_k(x)|  \gtrsim_{(M,g)}  n^{\frac{d+1}{2d}}$$
and that we expect, in absence of spooky action at a distance, that 
$$\am^{(n)}(x,y) \lesssim (\log n)^c \sqrt{n} \ll   n^{\frac{d+1}{2d}}$$
for points $x \neq y$ that are not close. We also note that
$$ \Delta_{y} \am^{(n)}(x,y) \big|_{y =x} = -\sum_{k=1}^{n} \lambda_k |\phi_k(x)|$$
which indicates that $\am(x,y)$, as a function of $y$, is locally very concave.
\begin{quote}
\textbf{Question 2.} When do we have, for all $x \in M$, and for all $n$ sufficiently large, an estimate of the type
\begin{equation} \label{eq:diag1}
 \max_{y \in M} \am^{(n)}(x,y) \leq c \am^{(n)}(x,x)?
\end{equation}
One might be tempted to conjecture a stronger statement that in `most' cases the constant
should tend to 1 as $n \rightarrow \infty$. When is
\begin{equation} \label{eq:diag2}
 \max_{y \in M} \am^{(n)}(x,y) \leq \left(1 + o_n(1)\right) \am^{(n)}(x,x)?
 \end{equation}
 When is it true that the point $x^*$ in which $\am^{(n)}(x,x^*) = \max_{z \in M} \am^{(n)}(x,z)$ assumes its maximum is not far away from $x$ and satisfies
 \begin{equation} \label{eq:diag3}
 d(x,x^*) \lesssim n^{-\frac{1}{d}}?
 \end{equation}
\end{quote}

  \begin{center}
 \begin{figure}[h!]
 \begin{tikzpicture}
 \node at (0,0) {\includegraphics[width=0.5\textwidth]{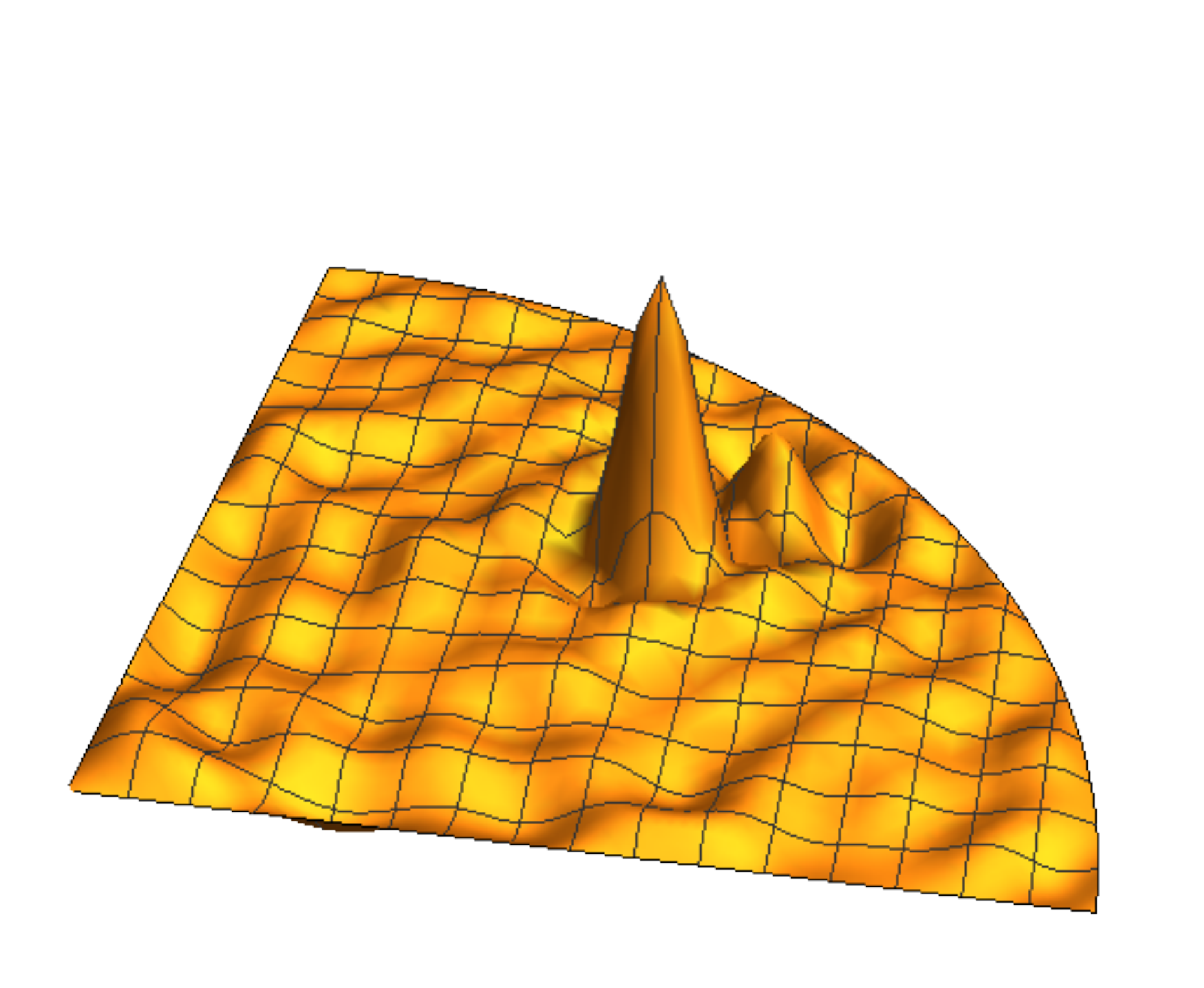}};
  \node at (6,0) {\includegraphics[width=0.38\textwidth]{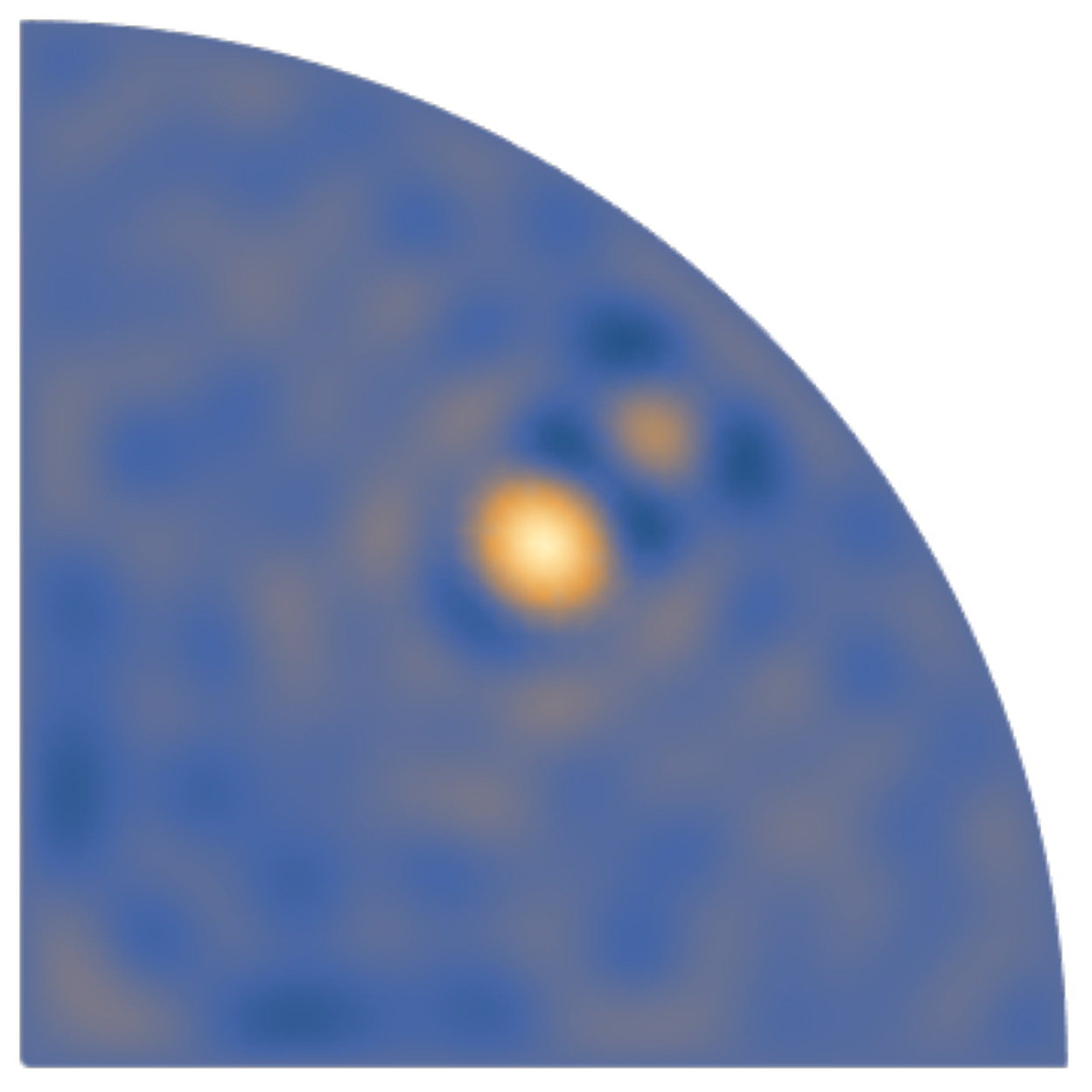}};
 \end{tikzpicture}
 \caption{Eigenfunctions on a quarter-disk with Dirichlet conditions. For the point $x = (0.5, 0.5)$ on the axis of symmetry, we see a formation of spooky action just behind $x$ itself $\am^{(189)}((0.5, 0.5), (x,y))$ (left and right).}
 \end{figure}
 \end{center}

We expect all three properties to be true for `generic' manifolds. \eqref{eq:diag1} might be true under more general conditions -- one has to be a bit careful with boundary effects: consider the orthogonal basis of functions $\phi_k(x) = \sin{(k x)}$ on the interval $[0,\pi]$ where $1 \leq k \leq n$. Then there is a $\sim n^{-1}$ neighborhood around 0 such that all eigenfunctions are positive and monotonically increasing in that neighborhood and we cannot hope for any such estimates -- needless to say, the example is non-generic in many ways. The same type of example also shows that \eqref{eq:diag2} is not necessarily always satisfied in a region bounded away from the boundary (see Proposition \ref{prop:sines}). In contrast, the property $\eqref{eq:diag3}$ might be the most robust of the three: it failing would mean that one deals with \textit{very} spooky action at a distance.
\begin{proposition} \label{prop:sines}
Consider the orthogonal basis of functions $\phi_k(x) = \sin{(k x)}$ on the interval $[0,\pi]$ where $1 \leq k \leq n$. Then, for $n$ sufficiently large, there exists $x_n \sim \pi/2 + n^{-10}$
and $y_n \sim x_n + 1.17 n^{-1}$ such that
$$ \am^{(n)}(x_n, y_n)   \sim 1.308 \cdot \am^{(n)}(x_n,x_n).$$
\end{proposition}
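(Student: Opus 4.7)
The plan is to exploit near-symmetry: at $x_0 = \pi/2$ the signs $\varepsilon_k := \sgn(\sin(kx_0))$ follow a simple periodic pattern, which lets one sum $\am^{(n)}(x_0,\cdot)$ in closed form. First I identify these signs at $x_n = \pi/2 + n^{-10}$. For $k$ odd, $\sin(kx_n) = (-1)^{(k-1)/2}\cos(kn^{-10})$; for $k=2m$ even, $\sin(kx_n) = (-1)^m\sin(2mn^{-10})$, so the perturbation $n^{-10}$ serves only to resolve the zeros at even $k$ (safely, since $kn^{-10} \le n^{-9}$). In both cases
\[ \varepsilon_k = \sqrt{2}\cos\!\left(\tfrac{k\pi}{2} - \tfrac{\pi}{4}\right), \]
giving the pattern $+,-,-,+,+,-,-,\ldots$. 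As a byproduct, only odd-$k$ terms contribute non-negligibly to the diagonal, yielding $\am^{(n)}(x_n,x_n) = \sum_{k=1}^n |\sin(kx_n)| = \lceil n/2\rceil + O(n^{-8})$.

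Next, the product-to-sum identity $2\cos A\sin B = \sin(B+A) + \sin(B-A)$ turns $\varepsilon_k\sin(ky)$ into two $k$-linear sines, so that
\[ \am^{(n)}(x_n,y) = \frac{\sqrt{2}}{2}\sum_{k=1}^n \left[\sin\!\bigl(k(y+\tfrac{\pi}{2}) - \tfrac{\pi}{4}\bigr) + \sin\!\bigl(k(y-\tfrac{\pi}{2}) + \tfrac{\pi}{4}\bigr)\right]. \]
Both sums evaluate in closed form via $\sum_{k=1}^n \sin(ka+b) = \frac{\sin(na/2)}{\sin(a/2)}\sin\bigl((n+1)a/2 + b\bigr)$. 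Setting $y = \pi/2 + \delta$ with $\delta = O(1/n)$, the first sum has denominator $\sin((\pi+\delta)/2) = \cos(\delta/2)$ bounded away from zero, so it contributes only $O(1)$; the second is a Dirichlet-like kernel concentrated near $\delta = 0$. Writing $u = n\delta/2$ and Taylor-expanding gives
\[ \am^{(n)}\!\left(x_n, \tfrac{\pi}{2}+\delta\right) = \frac{n\sqrt{2}}{2}\,F(u) + O(1), \qquad F(u) := \frac{\sin(u)\sin(u+\pi/4)}{u}. \]

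Since $F(0) = \sqrt{2}/2$, dividing by $\am^{(n)}(x_n,x_n) \sim n/2$ yields the ratio $\sqrt{2}\,F(u) + O(1/n)$, recovering the trivial value $1$ at $u = 0$. It then suffices to maximize $F$ over $u > 0$. Differentiating and using $\frac{d}{du}[\sin(u)\sin(u+\pi/4)] = \sin(2u+\pi/4)$, the critical equation reduces to
\[ u\sin(2u+\pi/4) = \sin(u)\sin(u+\pi/4), \]
equivalently $(v - \pi/4)\sin v + \cos v = \sqrt{2}/2$ with $v = 2u + \pi/4$. A one-dimensional numerical search locates the first positive maximum at $u^* \approx 0.585$, giving $\delta^* = 2u^*/n \approx 1.17\,n^{-1}$ and $\sqrt{2}\,F(u^*) \approx 1.308$. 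The main obstacle is nothing structural, only careful bookkeeping of the $O(1)$ error terms (from the first sum $T_1$ and the Taylor expansion of the Dirichlet kernel); since both quantities being compared are $\Theta(n)$, these errors are absorbed by the $\sim$-asymptotic stated in the proposition.
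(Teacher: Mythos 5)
Your proof is correct, and it reaches the paper's answer by a genuinely different computation. The paper fixes the ansatz $y=\pi/2+c/n$, Taylor-expands each $\sin(iy)$ around $\pi/2$ to all orders, groups the terms by residue class mod $4$, evaluates the power sums $\sum_{i\equiv j} i^{\ell}=\tfrac{n^{\ell+1}}{4(\ell+1)}+\mathcal{O}(n^{\ell})$, and resums the resulting series into $\tfrac{n}{4c}(1-\cos c)$ and $\tfrac{n}{4}-\tfrac{n}{4c}(c-\sin c)$, arriving at the ratio $(1-\cos c+\sin c)/c$. You instead encode the sign pattern exactly as $\varepsilon_k=\sqrt{2}\cos(k\pi/2-\pi/4)$, apply product-to-sum, and evaluate the two resulting sums as closed-form Dirichlet-type kernels, one of which is $\mathcal{O}(1)$ (denominator $\cos(\delta/2)$ bounded below) and the other of which localizes at $\delta=0$; a single Taylor expansion of the closed form then yields $\tfrac{n\sqrt{2}}{2}F(u)+\mathcal{O}(1)$ with $F(u)=\sin(u)\sin(u+\pi/4)/u$. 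The two answers agree identically: with $c=2u$ one has $\sqrt{2}\,F(u)=\bigl(1-\cos(2u)+\sin(2u)\bigr)/(2u)$, which is exactly the paper's ratio function, and your critical point $u^{*}\approx 0.587$ corresponds to the paper's $c\approx 1.175$ with common maximum $\approx 1.308$. What your route buys is transparency of the error term: the Dirichlet-kernel identity is exact, so the only approximations are the $\mathcal{O}(1)$ first sum and a one-step expansion of $\sin(\delta/2)$ and $\sin((n+1)\delta/2+\pi/4)$, whereas the paper must (somewhat informally) control an infinite resummation of $\mathcal{O}(n^{\ell})$ errors weighted by $c^{\ell}/(\ell!\,n^{\ell})$. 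The paper's route, on the other hand, makes the mod-$4$ structure of the signs and the mechanism ("linear terms push the maximum off the diagonal") more visible. Both are complete proofs; since the quantities compared are $\Theta(n)$ and all errors are $\mathcal{O}(1)$, your bookkeeping suffices for the stated asymptotic.
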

It could be of interest to study \eqref{eq:diag1}, \eqref{eq:diag2} and \eqref{eq:diag3} on other manifolds where eigenfunctions can be explicitly computed. We reiterate that manifolds $(M,g)$ on which explicit computations with eigenfunctions are possible are those endowed with additional structure which are unlikely to behave like `generic' manifolds. A nice exception is the sphere $\mathbb{S}^d$ with a randomly chosen basis; random bases on the sphere seem to behave, with regards to computable statistics, much as one would expect a `generic' basis of eigenfunctions on a generic manifold to behave.

\subsection{$L^{\infty}-$growth: the main idea} The purpose of this section is to illustrate the connection between $\am^{(n)}(x,y)$ and eigenfunction growth; we will keep the presentation informal for the sake of clarity of exposition -- a rigorous formulation of these ideas will be presented in \S 2.6.

\subsubsection{The Idea.} Suppose that we are given the first $n$ eigenfunctions $\phi_1, \dots, \phi_n$ and that we try to understand whether $\phi_{n+1}$ can be large in a point $x_0 \in M$ where $x_0$ is the point chosen so that $\phi_{n+1}$ (after possibly flipping its sign) assumes its global maximum in $x_0$, i.e. $\phi_{n+1}(x_0) = \| \phi_{n+1}\|_{L^{\infty}(M)}$. 
We first note that because of orthogonality of eigenfunctions
$$ \int_{M} \am^{(n)}(x_0,y) \phi_{n+1}(y) dy = 0.$$
At the same time, recalling the lower bound  \eqref{eq:easylower}, we have
 $$ \am^{(n)}(x_0,x_0) \geq \frac{ n - \mathcal{O}(n^{\frac{d-1}{d}})}{\max_{1 \leq k \leq n} \| \phi_k\|_{L^{\infty}}}.$$
In the generic setting, where the first $n$ eigenfunctions do not exhibit a lot of concentration, we expect $ \am^{}(x_0,x_0)$ to not be much smaller than $n$ (perhaps by some multiplicative logarithmic factors). 
At the same time, since it is a linear combination of eigenfunctions the largest of which oscillates at frequency $\lambda_n \sim n^{2/d}$, we expect all these functions to be locally constant at the wavelength $1/\sqrt{\lambda_n}$ and
$$\am^{(n)}(x_0, y) \sim \am^{(n)}(x_0, x_0) \qquad \mbox{for} \qquad d(x_0, y) \leq  c\cdot \lambda_n^{-\frac{1}{2}} \sim n^{-\frac{1}{d}}.$$
However, the same is also true for $\phi_{n+1}$. We note that the associated eigenvalue $\lambda_{n+1}$ is not much larger than $\lambda_n$ (in particular, $\lambda_{n+1}/\lambda_n \rightarrow 1$) implying that it is also locally constant (interpreted in a suitable sense) at scale $n^{-1/d}$. 
Using orthogonality, we arrive at
\begin{align*}
  \int_{M \setminus B(x_0, n^{-1/d})} \am^{(n)}(x_0,y) \phi_{n+1}(y) dy = -\int_{B(x_0, n^{-1/d})} \am^{(n)}(x_0,y) \phi_{n+1}(y) dy 
\end{align*}
Since everything is nearly constant at scale $\sim n^{-1/d}$, we end up with an estimate for the right-hand side which is
of the form
\begin{align*}
\int_{B(x_0, n^{-1/d})} \am^{(n)}(x_0,y) \phi_{n+1}(y) dy \sim \frac{ \am^{(n)}(x_0, x_0)}{n} \cdot \| \phi_{n+1}\|_{L^{\infty}}.
\end{align*}
This then implies that, up to logarithmic factors, 
$$  \| \phi_{n+1}\|_{L^{\infty}} \sim  \left| \int_{M \setminus B(x_0, n^{-1})} \am^{(n)}(x_0,y) \phi_{n+1}(y) dy \right|.$$
 We immediately see that this integral being large
is actually somewhat curious: it indicates that the eigenfunction $\phi_{n+1}(y)$ is strongly correlated with a suitable linear combination
of the first $n$ eigenfunctions on most of the manifold minus a small ball, $M \setminus B(x_0, n^{-1/d})$, which would be extremely interesting and unexpected.
\vspace{-20pt}
  \begin{center}
 \begin{figure}[h!]
 \begin{tikzpicture}
 \node at (0,0) {\includegraphics[width=0.5\textwidth]{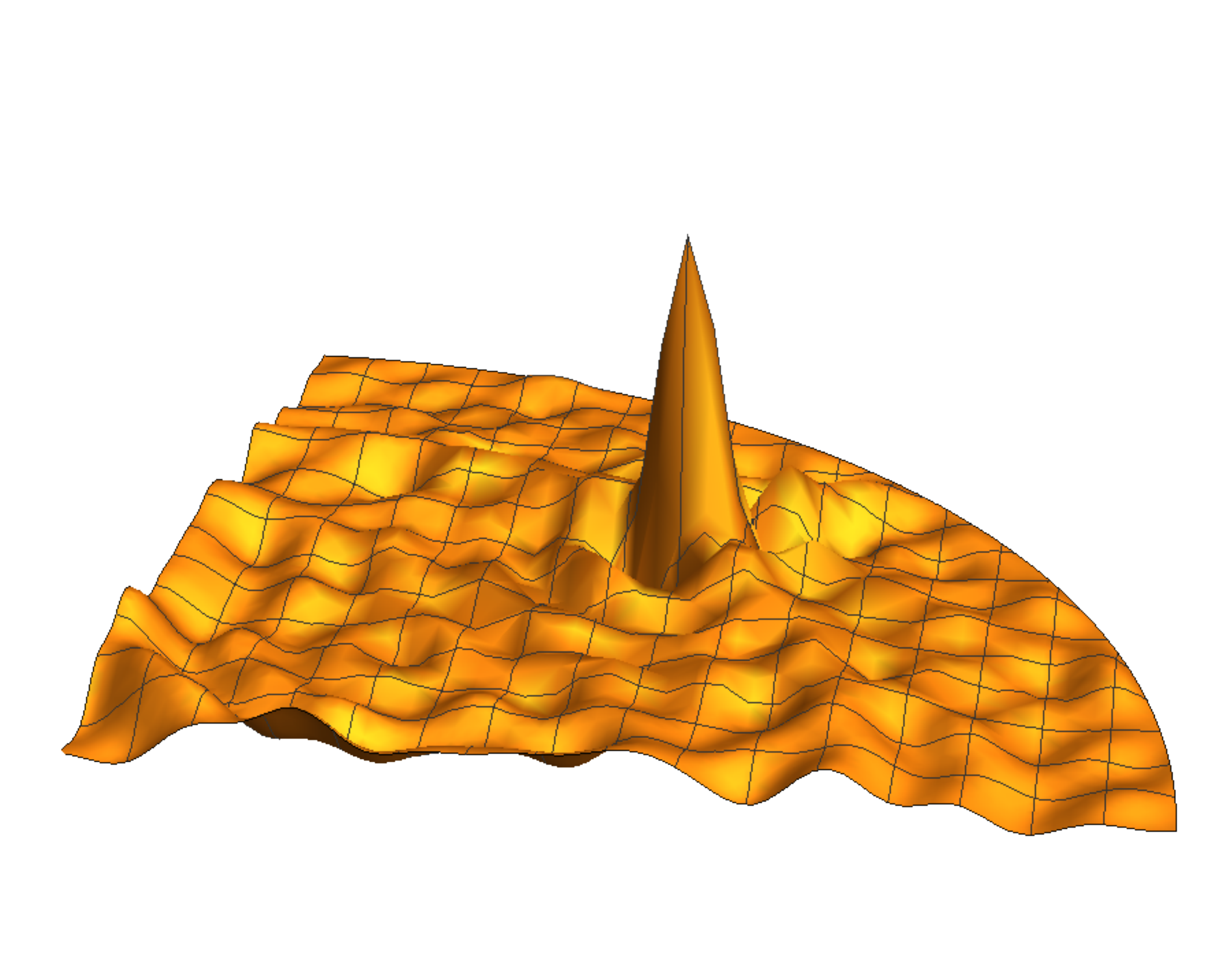}};
  \node at (6,0) {\includegraphics[width=0.35\textwidth]{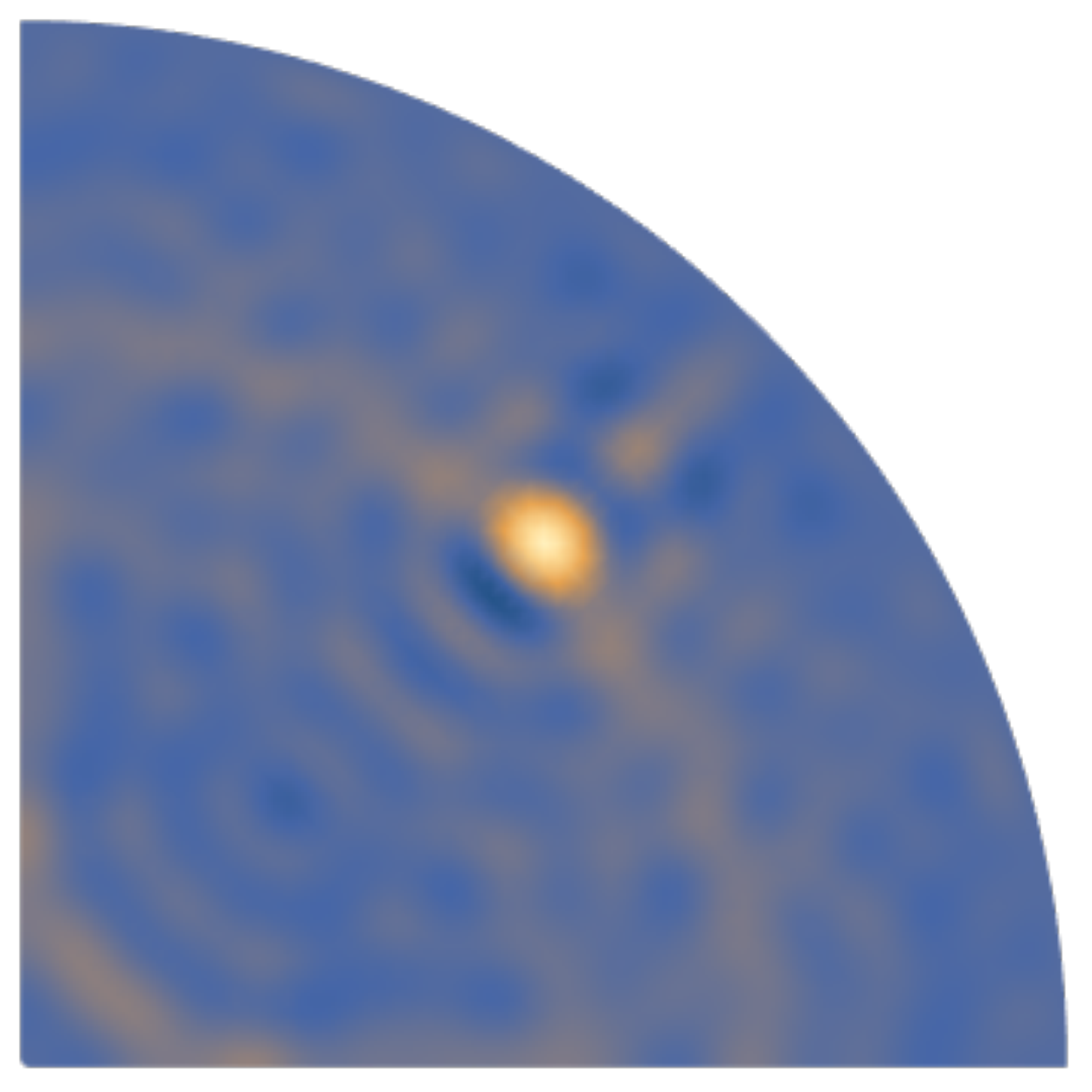}};
 \end{tikzpicture}
 \caption{Eigenfunctions on a quarter-disk with Neumann conditions: $\am^{(181)}((0.5, 0.5), (x,y))$ (left and right). The picture hints at spooky action at a distance along an entire curve.}
 \end{figure}
 \end{center}
\vspace{-20pt}
\subsubsection{Some Easy Consequences.} 
 A basic application of Cauchy-Schwarz, together with \eqref{eq:up} and Weyl's theorem implies
$$   \left| \int_{M \setminus B(x_0, n^{-1})} \am^{(n)}(x_0,y) \phi_{n+1}(y) dy \right| \lesssim \lambda_{n}^{d/4}.$$
This is weaker than H\"ormander's bound by a factor of $\lambda_n^{1/4}$. If we assume that the eigenfunctions do \textit{not} exhibit
spooky action at a distance, then we can get, up to logarithmic factors, a stronger bound
$$   \left| \int_{M \setminus B(x_0, n^{-1})} \am^{(n)}(x_0,y) \phi_{n+1}(y) dy \right| \lesssim \lambda_{n}^{d/4} \cdot \| \phi_{n+1} \|_{L^1}.$$
Both these inequalities are, in some sense, missing the point since them being even close to sharp would imply
strong algebraic connection between $\am$ and $\phi_{n+1}$ which is much more interesting and one way of phrasing the main insight behind this argument: eigenfunction growth is the consequence of unexpected correlations.
\subsubsection{Random Wave Model.} Finally, we return to the random wave model and use it to try and understand the size of the integral.  If
$\phi_{n+1}(y)$ behaves like a random wave, then we should think of $\phi_{n+1}$ as being locally described by random Gaussians at scale $n^{-1/d}$. This means the integral should be roughly, using again \eqref{eq:up}, behave as a random variable
\begin{align*}
  \int_{M \setminus B(x, c_2 n^{-1})} \am^{(n)}(x,y) \phi_{n+1}(y) dy &\sim \am^{(n)}(x,y_1)\cdot \frac{\pm 1}{n} + \dots +  \am^{(n)}(x,y_n)\cdot \frac{\pm 1}{n} \\
&\sim \pm \sqrt{ \frac{ \am^{(n)}(x,y_1)^2}{n^2} + \dots + \frac{ \am^{(n)}(x,y_n)^2}{n^2}  }  \sim \pm 1.   \end{align*}
Thus, whenever the random wave heuristic applies, the integral will typically be at scale $\sim 1$ implying that the eigenfunctions do not
concentrate (up to logarithmic factors). This may appear to be circular reasoning (since the random wave model itself has lack of concentration
built into it) -- however, note that in this argument the random wave model is only used in integrated form (and not pointwise); it also illustrates how this approach
is naturally aligned with the random wave model.

\subsubsection{Summary.}  We can now, informally, summarize these ideas as follows: in order for $\phi_{n+1}(x)$ to be very large, we either have
\begin{enumerate}
\item that $\am^{}(x,x)$ is unexpectedly small (which, in this case, would mean something like $\am^{(n)}(x,x) \ll n$ by more than multiplicative logarithmic factors) 
\item or $\phi_{n+1}(y)$ is correlated with $\am^{(n)}(x,y)$ on the manifold minus a ball 
\item or both.
\end{enumerate}
Note that $\am^{(n)}(x,x)$ being unexpectedly small requires many of the first $n$ eigenfunctions to be unexpectedly large in $x$ (this follows from the proof of Proposition 2). We recall that
 $$ \am(x,y) = \varepsilon_1 \phi_1(y) + \varepsilon_2 \phi_2(y) + \dots + \varepsilon_n \phi_n(y) \qquad \mbox{where} ~\varepsilon_i \in \left\{-1,0,1\right\}$$
 and that it would be quite unexpected to have this be in any way structurally aligned with $\phi_{n+1}(y)$ unless there was some amount of algebraic structure present. There is a bootstrapping aspect to the argument: (1) can, in some sense, not be the origin of eigenfunction growth since it relies on concentration already being present in earlier eigenfunctions; \textit{the driving force behind eigenfunction concentration comes from unexpected correlations}. Conversely, if we have spooky action a distance, say $\am^{}(x, y) \sim \am(x,x)$ for $x \neq y$, then the entire approach cannot suitably exclude $\phi_{n+1}(x)$ and $\phi_{n+1}(y)$ being large simultaneously since these contributions could then cancel in the integral.\\
 
 We conclude by noting that there might be more than one way of formalizing this idea. Theorem 2, presented in the next section, is a fairly verbatim way of capturing the essence of the argument. Note, however, that nothing in the argument was particularly sensitive about flipping a few extra signs and this approach leads to an entirely different way of thinking about the approach (see \S 3.3 for details).

\subsection{$L^{\infty}-$growth: Theorem 2}

We will argue that $\am(x,y)$, spooky action at a distance and $L^{\infty}-$growth of eigenfunctions are all connected. We first state the result and then
discuss it in greater detail.

\begin{theorem} Suppose $(M,g)$, normalized $\vol(M) = 1$, has a basis of Laplacian eigenfunctions $(\phi_n)_{n=1}^{\infty}$ and suppose the eigenfunction $\phi_{n+1}$ assumes its maximum in $z \in M$. There exists a constant $c_{\kappa}$ only depending on 
$$ \kappa = \frac{ \max_{w \in M} \am^{(n)}(z,w)}{\am^{(n)}(z,z)} \geq 1$$
such that for all $t \leq 1/(4 \kappa \lambda_{n+1})$
$$  \phi_{n+1}(z) \leq c_{\kappa} \frac{2n}{\am(z,z)} \left|  \int_{M} \left(1 - \frac{p_t(z,y)}{\max_{w \in M} p_t(z,w)} \right) \am(z,y) \phi_{n+1}(y) dy \right|.$$
\end{theorem}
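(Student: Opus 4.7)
The starting point is the orthogonality $\int_M \am(z,y)\phi_{n+1}(y)\,dy = 0$, which holds because $\am(z,\cdot) = \sum_{k=1}^n \sgn(\phi_k(z))\phi_k$ lies in $\operatorname{span}(\phi_1,\dots,\phi_n)$. Writing $M_t := \max_{w\in M} p_t(z,w)$ and splitting $1 = p_t(z,y)/M_t + (1-p_t(z,y)/M_t)$ yields
$$ \Bigl|\int_M \tfrac{p_t(z,y)}{M_t}\am(z,y)\phi_{n+1}(y)\,dy\Bigr| = \Bigl|\int_M \bigl(1-\tfrac{p_t(z,y)}{M_t}\bigr)\am(z,y)\phi_{n+1}(y)\,dy\Bigr|, $$
so the theorem reduces to the \emph{lower} bound $|I| := \bigl|\int_M (p_t/M_t)\,\am(z,\cdot)\,\phi_{n+1}\bigr| \ge \am(z,z)\phi_{n+1}(z)/(2nc_\kappa)$. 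The heat kernel here plays the role of a smooth, spectrally well-controlled cutoff localized at scale $\sqrt{t}\sim \lambda_{n+1}^{-1/2}$.

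\textbf{Core estimate.} I would write $\phi_{n+1}(y) = \phi_{n+1}(z) + (\phi_{n+1}(y)-\phi_{n+1}(z))$ and distribute, splitting $M_t I$ into a main term and an error:
$$ M_t I = \phi_{n+1}(z)\int_M p_t(z,y)\am(z,y)\,dy + \int_M p_t(z,y)\am(z,y)\bigl(\phi_{n+1}(y)-\phi_{n+1}(z)\bigr)\,dy. $$
Since $\am(z,\cdot)$ lies in $\operatorname{span}(\phi_1,\dots,\phi_n)$, the semigroup gives $\int p_t\am = \sum_{k\le n} e^{-t\lambda_k}|\phi_k(z)| \ge e^{-t\lambda_n}\am(z,z) \ge (1-1/(4\kappa))\am(z,z) \ge \tfrac34\am(z,z)$, using the hypothesis $t\lambda_n \le t\lambda_{n+1}\le 1/(4\kappa)$ and $\kappa\ge 1$. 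For the error $E$, I exploit that $z$ is the global maximum of $\phi_{n+1}$, so $\phi_{n+1}(z)-\phi_{n+1}(y)\ge 0$ pointwise and $\int p_t(z,y)(\phi_{n+1}(z)-\phi_{n+1}(y))\,dy = (1-e^{-t\lambda_{n+1}})\phi_{n+1}(z)\le \phi_{n+1}(z)/(4\kappa)$; combined with a pointwise bound $|\am(z,y)|\le \tilde\kappa\,\am(z,z)$, this yields $|E|\le \tfrac{\tilde\kappa}{4\kappa}\am(z,z)\phi_{n+1}(z)$. With $c_\kappa$ chosen to absorb $\tilde\kappa/\kappa$, the main term dominates and $M_t|I|\ge \tfrac12 \am(z,z)\phi_{n+1}(z)$.

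\textbf{Bounding $M_t$.} Finally, I bound the normalization $M_t$. By Cauchy-Schwarz applied to the spectral expansion $p_t(z,w) = \sum_k e^{-t\lambda_k}\phi_k(z)\phi_k(w)$, $M_t \le \max_w \sqrt{p_t(z,z)\,p_t(w,w)} \le \max_w p_t(w,w)$. The standard on-diagonal heat-kernel upper bound (a consequence of the local Weyl law) gives $\sup_w p_t(w,w)\lesssim t^{-d/2}$, so at $t = 1/(4\kappa\lambda_{n+1})$ and using Weyl's $\lambda_{n+1}\sim n^{2/d}$, one gets $M_t\lesssim_\kappa n$. Combining with the previous step, $|I| \ge \am(z,z)\phi_{n+1}(z)/(2M_t) \ge \am(z,z)\phi_{n+1}(z)/(2nc_\kappa)$, which rearranges to the stated inequality.

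\textbf{Main obstacle.} The most delicate step is justifying the pointwise bound $|\am(z,y)|\le \tilde\kappa\,\am(z,z)$, since $\kappa$ as defined controls only $\max_w\am(z,w)$ from above, not $\min_w\am(z,w)$ from below; absorbing the extra loss into $c_\kappa$ requires care. A cleaner replacement is to estimate $|E|$ by Cauchy-Schwarz, $|E|\le \sqrt{\int p_t\,\am(z,\cdot)^2}\cdot\sqrt{\int p_t(\phi_{n+1}-\phi_{n+1}(z))^2}$, with the second factor controlled by the semigroup identity $\int p_t (\phi_{n+1}-\phi_{n+1}(z))^2 \le 2(1-e^{-t\lambda_{n+1}})\phi_{n+1}(z)^2$ and the first estimated via $e^{-t\Delta}(\am(z,\cdot)^2)(z)$. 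Tracking the explicit $\kappa$-dependence through this chain so that the main term genuinely dominates is where the bulk of the technical work concentrates.
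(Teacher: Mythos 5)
Your overall architecture matches the paper's proof: reduce via orthogonality to lower-bounding the heat-kernel-weighted integral, split $\phi_{n+1}(y)=\phi_{n+1}(z)+(\phi_{n+1}(y)-\phi_{n+1}(z))$, control the main term by the semigroup identity $\int_M p_t(z,y)\am(z,y)\,dy=\sum_{k\le n}e^{-t\lambda_k}|\phi_k(z)|\ge e^{-t\lambda_n}\am(z,z)$, and finish with $\max_w p_t(z,w)\lesssim t^{-d/2}\lesssim_\kappa n$. The gap is exactly where you flag it, and neither of your two proposed fixes closes it. The first route needs the \emph{two-sided} pointwise bound $|\am(z,y)|\le\tilde\kappa\,\am(z,z)$, but $\kappa$ only controls $\max_w\am(z,w)$ from above; nothing in the hypotheses prevents $\am(z,y)$ from being very negative somewhere. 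The Cauchy--Schwarz fallback fares worse: the only available bound on the first factor is $\int_M p_t(z,y)\am(z,y)^2\,dy\le \bigl(\max_w p_t(z,w)\bigr)\int_M\am(z,y)^2\,dy\lesssim M_t\,n$, so after dividing by $M_t$ the error is of order $\sqrt{n/\kappa}\cdot\|\phi_{n+1}\|_{L^\infty}$ up to constants, which can swamp the main term $\sim\am(z,z)\|\phi_{n+1}\|_{L^\infty}/M_t$ since $\am(z,z)$ may be as small as $n^{(d+1)/(2d)}\ll n$; and even granting a pointwise bound $|\am(z,y)|\le\kappa\am(z,z)$, Cauchy--Schwarz only yields an error $\sim\sqrt{\kappa}\,\am(z,z)\phi_{n+1}(z)$, which grows with $\kappa$ and does not stay below the main term.

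The paper's resolution is a one-sided sign argument that makes the two-sided bound unnecessary. Since $z$ is the global maximum, the integrand factor $u(y):=p_t(z,y)\bigl(\phi_{n+1}(y)-\phi_{n+1}(z)\bigr)$ is nonpositive \emph{pointwise}, and for $u\le 0$ and $\am(z,y)\le \max_w\am(z,w)=\kappa\am(z,z)$ one has $\am(z,y)\,u(y)\ge \kappa\am(z,z)\,u(y)$. Integrating and using $\int_M u = -(1-e^{-t\lambda_{n+1}})\phi_{n+1}(z)\ge -t\lambda_{n+1}\phi_{n+1}(z)\ge -\phi_{n+1}(z)/(4\kappa)$, the error is bounded \emph{below} by $-\tfrac14\am(z,z)\phi_{n+1}(z)$: the factor $\kappa$ from the upper bound on $\am(z,\cdot)$ cancels exactly against the $1/(4\kappa)$ in the admissible time scale, which is why the hypothesis $t\le 1/(4\kappa\lambda_{n+1})$ takes that form. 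Combined with your (correct) main-term estimate $\ge \tfrac34\am(z,z)\phi_{n+1}(z)$ (the paper uses $e^{-\alpha}(1+\kappa)-\kappa\ge\tfrac12$ with $\alpha=1/(4\kappa)$, a cosmetically different bookkeeping of the same cancellation), this gives the claimed $\tfrac12\am(z,z)\phi_{n+1}(z)$ lower bound and the rest of your argument goes through verbatim.
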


\textbf{Remarks.}
\begin{enumerate}
\item As discussed in \S 2.4, we expect that typically $\kappa \sim 1$. In particular, we expect to be able to use Theorem 2 with $t \sim \lambda_{n}^{-1}$.
\item On the length scale $t \sim \lambda_n^{-1}$, the heat kernel $p_t(x,y)$ is localized at scale $\sqrt{t} \sim 1/\sqrt{\lambda_n}$ which 
is comparable to the wavelength. We then expect
$$  \left(1 - \frac{p_t(z,y)}{\max_{w \in M} p_t(z,w)} \right) \sim \begin{cases} 0 \qquad &\mbox{if}~d(z,y) \leq 1/\sqrt{\lambda_n} \\ 
1 \qquad &\mbox{otherwise.} \end{cases}$$
The weight can be understood as effectively filtering out the behavior of $\am(z,y) \phi_{n+1}(y)$ for $y$ at the scale of one wavelength around $z$.
\item As discussed in \S 2.5, there are two ways how $\phi_{n+1}(z)$ can be large: either $\am(z,z) \ll n$ or the integral is large: if $\am(z,z) \ll n$, then
many previous eigenfunctions had to be strongly localized in $z$. If the integral is large, then this means global negative correlation of $\am(z,y)$ and $\phi_{n+1}(y)$ one
wavelength from the point $z$ where the eigenfunction assumes its maximum.
\item A natural example is the classical zonal spherical harmonic on $\mathbb{S}^2$ (explained in greater detail in \S 5.3). There we have $\am(z,z) \sim n^{3/4} \ll n$, the integral is actually relatively small $\sim 1$ and $\| \phi_{n+1}\|_{L^{\infty}} \sim n^{1/4}$.
\end{enumerate}

\section{Comments and Remarks}
\subsection{Special Function Identities.} One interesting byproduct of the approach is that on any manifold $(M,g)$ 
with eigenfunction growth, we either have spooky action at a distance (which is interesting in itself) or a strong correlation between $\am^{(n)}(x,y)$ and $\phi_{n+1}$ over most of the manifold (which is also interesting in itself). We illustrate this for two different examples.

\subsubsection{The Dirichlet kernel.} Our first example may at first glance seem somewhat paradoxical: we consider the standard basis on $\mathbb{S}^1$. Naturally, there is no eigenfunction growth on one-dimensional manifolds. Nonetheless, as also explained by Theorem 1, the standard basis on $\mathbb{S}^1$ does exhibit characteristics typical of manifolds with eigenfunction growth (for example spooky action at a distance). We now consider the standard Fourier basis on $\mathbb{S}^1$
$$ \am^{(2n)}(x,y) = \sum_{k=1}^{n} \sgn(\sin(kx)) \sin(ky) + \sum_{k=1}^{n} \sgn(\cos(kx)) \cos(ky) $$
and try to understand what it says about the next eigenfunction $\cos{((n+1)y)}$ which, suitably interpreted, has `maximal eigenfunction growth' in the origin (somewhat paradoxically so). We simplify
$$ \am^{(2n)}(0,y) =\sum_{k=1}^{n} \cos(ky) = \csc \left(\frac{x}{2}\right) \sin \left(\frac{n x}{2}\right) \cos \left(\frac{1}{2}
   (n+1) x\right).$$
   This expression is orthogonal to $\cos{((n+1)y)}$. However, as predicted by Theorem 2, once we remove the origin, there is indeed a naturally appearing global negative correlation between $ \am^{(2n)}(0,y)$ and $\cos{((n+1)y)}$ (see Fig. \ref{fig:diri}). In stark contrast, nothing like this happens if we randomize the basis of eigenfunctions (see Fig. \ref{fig:destrospooky}).
  \begin{center}
 \begin{figure}[h!]
 \begin{tikzpicture}
 \node at (0,0) {\includegraphics[width=0.45\textwidth]{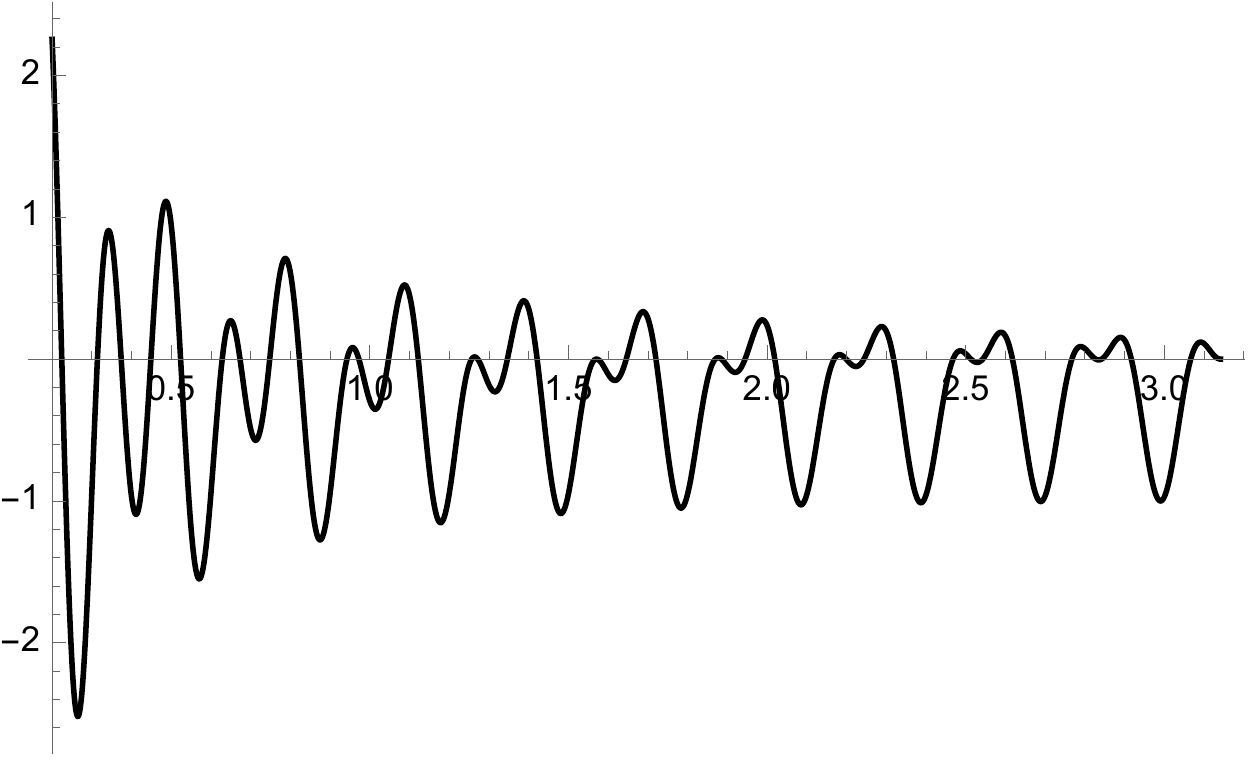}};
  \node at (6,0) {\includegraphics[width=0.45\textwidth]{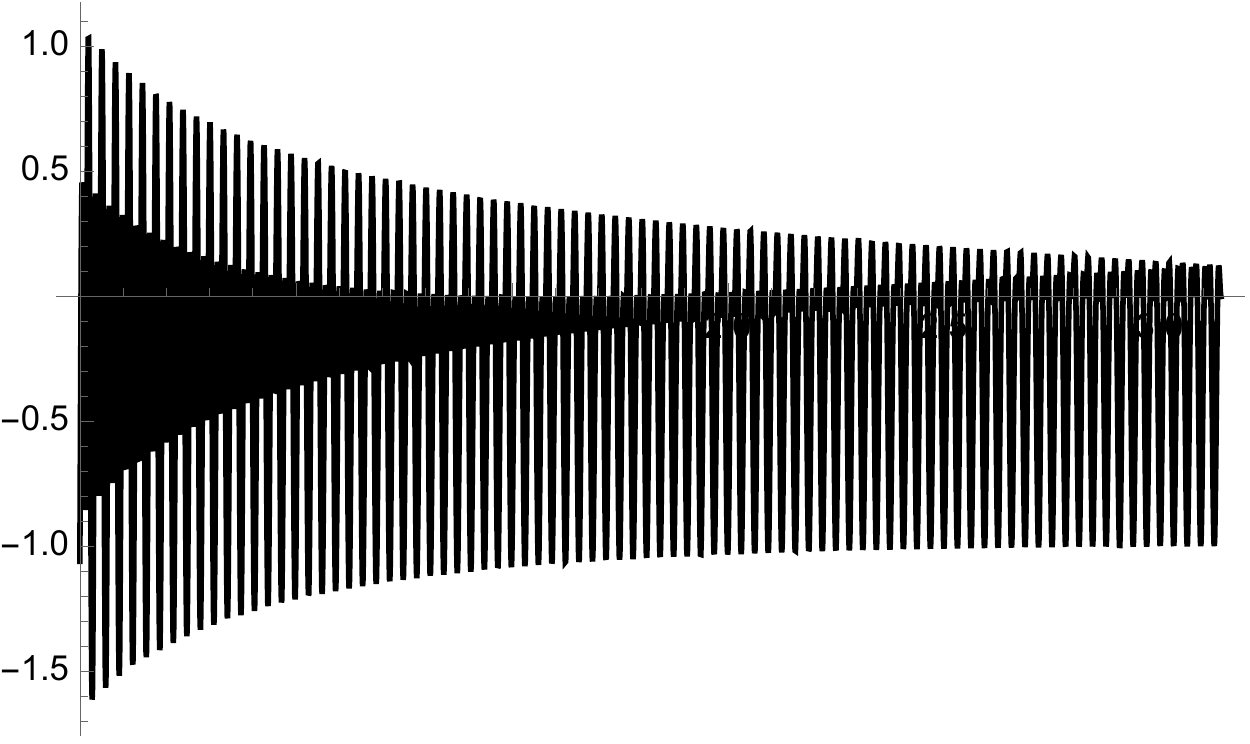}};
 \end{tikzpicture}
 \caption{ $\am^{(20)}(0, y) \cdot \cos{(21y)}$ (left) and $\am^{(201)}(0, y)\cos{(201y)}$ (right) away from the origin: global negative correlation. }
 \label{fig:diri}
 \end{figure}
 \end{center}

\subsubsection{Zonal spherical harmonics} Consider now the zonal spherical harmonics on $\mathbb{S}^2$ (eigenfunctions on $\mathbb{S}^2$ exhibiting maximal possible eigenfunction growth). These are radial and maximal at north and south pole. Since simultaneously, all other eigenfunctions vanish in the north/south pole, we can simple write everything as a function of $\theta$ and all arising quantities become functions of one variable. Defining $\phi_k:[0,\pi] \rightarrow \mathbb{R}$ via
$$ \phi_k(\theta) = \sqrt{k + \frac12} \cdot P_k(\cos{\theta})$$
we have the orthogonality relation
$$\int_0^{\pi} \phi_k(\theta) \phi_{\ell}(\theta) \sin{(\theta)} d\theta = \delta_{k \ell},$$
where $\sin{(\theta)}$ is the Jacobian from the area on the sphere.
We refer to \S 5.4 for more details.
Simultaneously, because $\phi_k(0) > 0$, we have
$$ \am(\mbox{north pole}, \theta) = \sum_{k=1}^{n}  \sqrt{k + \frac12} \cdot P_k(\cos{\theta}).$$
We observe that, $\am(0,\theta)$ and $\phi_{k+1}(\theta)$ have strong correlation in the origin $\theta =0$ followed by a global negative correlation away from the origin (see Fig. \ref{fig:heg})
In much the same spirit, we observe that spooky action at a distance numerically observed on the disk (see Fig. \ref{fig:bessel}) hints at a related phenomenon for sums of Bessel functions. Similarly, one might be able to revisit the examples of Bourgain \cite{bour2} and Rudnick \& Sarnak \cite{rudnick}.
We have not pursued this direction further but it is clear that on any manifold $(M,g)$ where explicit computations with eigenfunctions are possible, one might hope to be able to recover statements about special functions of this sort.
  \begin{center}
 \begin{figure}[h!]
 \begin{tikzpicture}
 \node at (0,0) {\includegraphics[width=0.45\textwidth]{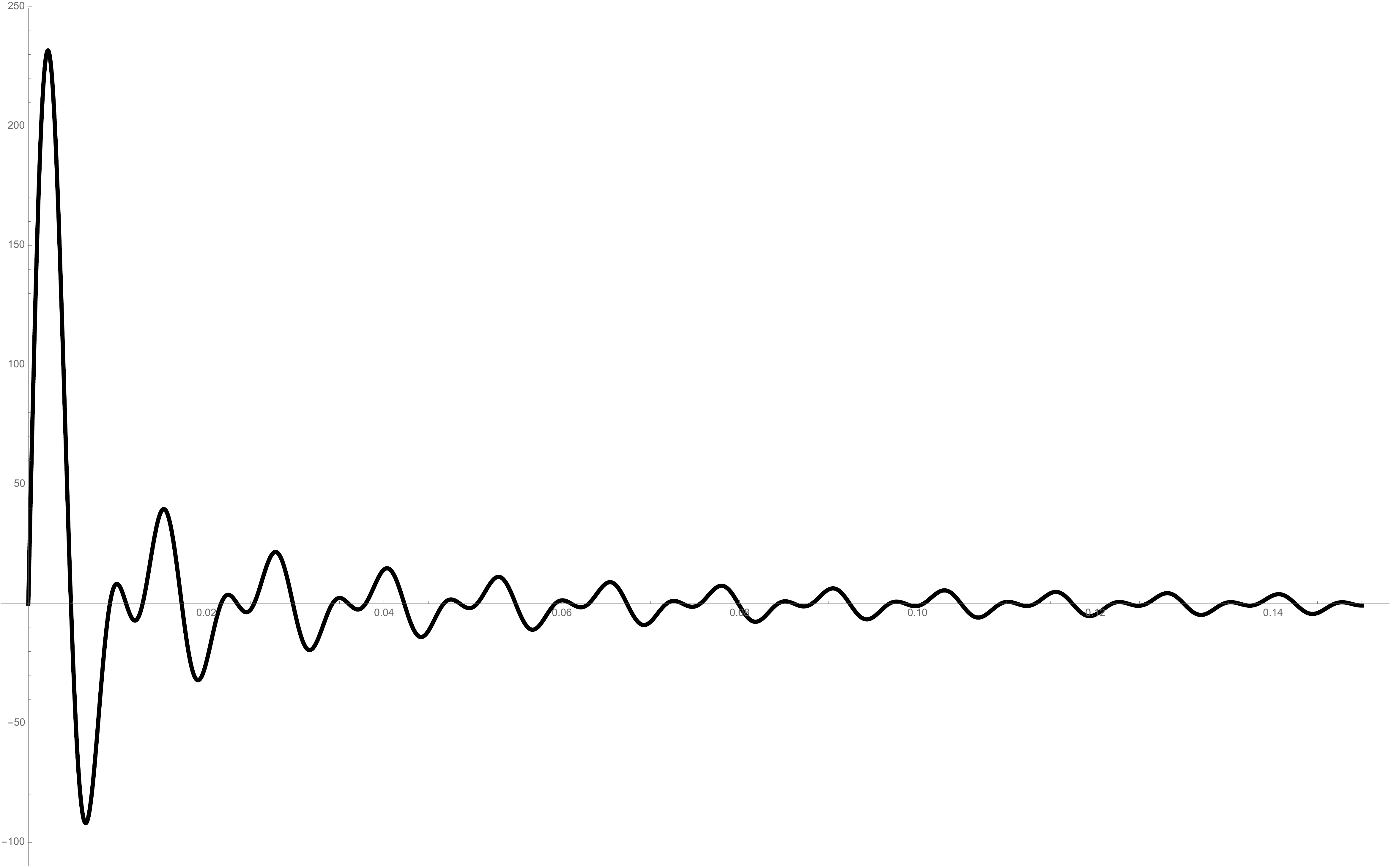}};
  \node at (6,0) {\includegraphics[width=0.45\textwidth]{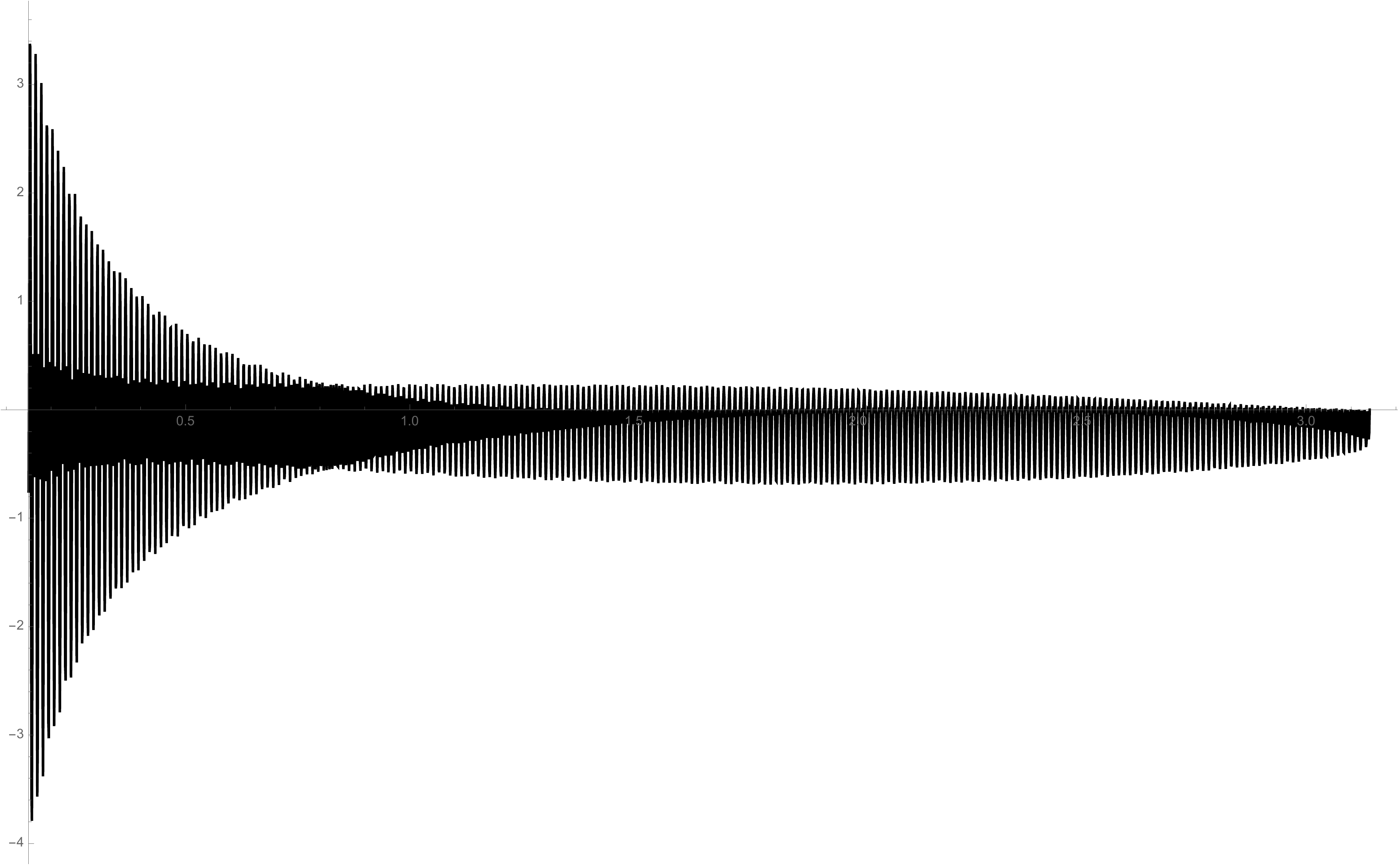}};
 \end{tikzpicture}
 \caption{A curious correlation between $\am^{(500)}(0, \theta)$ and $\phi_{501}(\theta)$. $\am^{(500)}(0, \theta)\phi_{501}(\theta)\sin{(\theta)}$ on $[0, 0.15]$ (left) and on $[0.15, \pi]$ (right). }
 \label{fig:heg}
 \end{figure}
 \end{center}

\subsection{General orthonormal bases.}
We also emphasize that the definition
$$ \am^{(n)}(x,y) = \sum_{k=1}^n \sgn(\phi_k(x)) \phi_k(y)$$
is not restricted to eigenfunctions but might be of interest for general families of functions. We illustrate this using
the Hermite functions on $\mathbb{R}$. The Hermite functions $(\psi_n)_{n=0}^{\infty}$
given by
$$ \psi_n(x) = \frac{1}{ \sqrt{\pi}}\frac{1}{2^{n/2} \sqrt{n!}} e^{-\frac{x^2}{2}} H_n(x),$$
where $H_n$ is the $n-$th Hermite polynomial.
They are an orthonormal family of functions that form a basis of $L^2(\mathbb{R})$. They can be naturally interpreted as eigenfunctions of the
operator $-\Delta + x^2$ but also make sense as an interesting family of functions in their own rights (for example for diagonalizing the Fourier transform).

  \begin{center}
 \begin{figure}[h!]
 \begin{tikzpicture}
 \node at (0,0) {\includegraphics[width=0.5\textwidth]{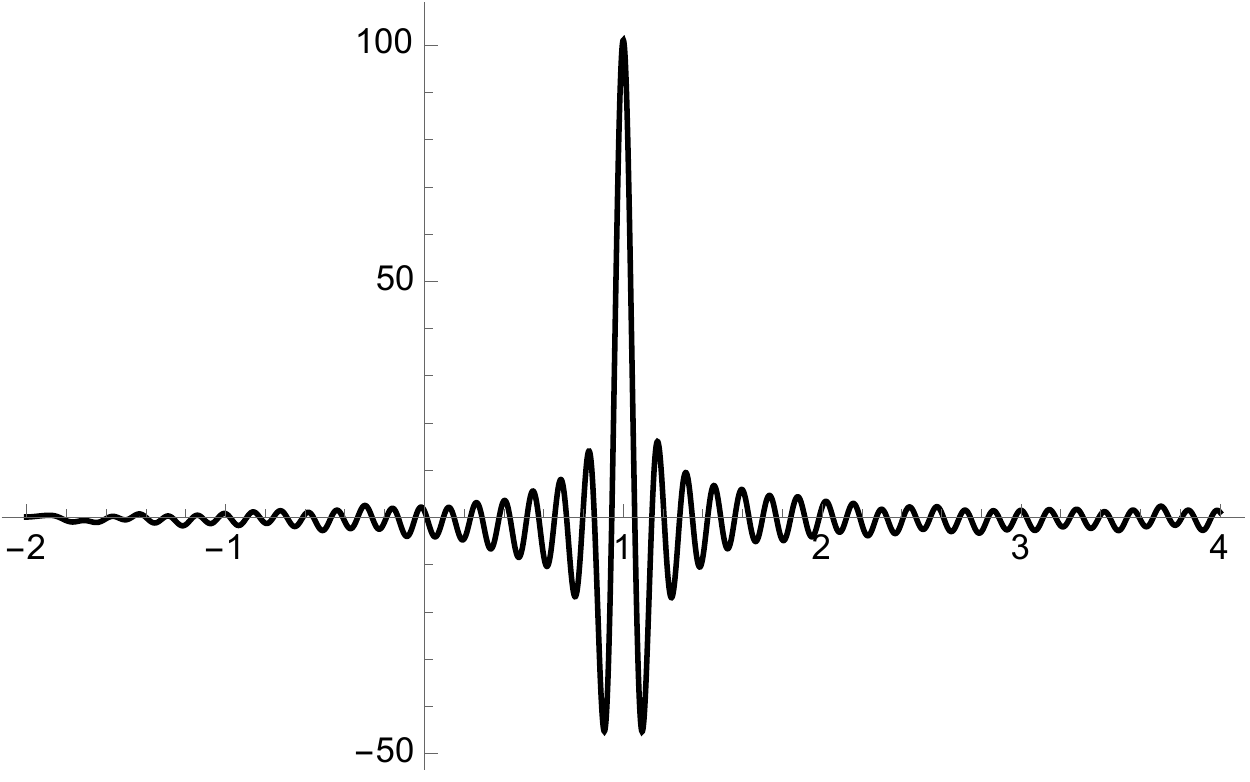}};
 \end{tikzpicture}
 \caption{ Hermite functions: $\am^{(1000)}(1,y)$ appears to be localized.}
  \label{fig:hermite}
 \end{figure}
 \end{center}

Asymptotically, for $x$ fixed, as $n \rightarrow \infty$, to leading order 
$$ \psi_n(x) \sim \frac{2^{1/4}}{\sqrt{\pi}} \frac{1}{n^{1/4}} \cos{\left(x \sqrt{2n} - \frac{n \pi}{2} \right)}.$$
Using this approximation as a suitably proxy, we see that it appears as if the Hermite functions might be
a candidate for a system of orthonormal functions with no spooky action at a distance (see Fig. \ref{fig:hermite}).
Given the large number of interesting orthogonal sequences that one might consider, we have not pursued this further
at this point but believe it to be a potentially interesting avenue for further research.

\subsection{More sign flips and random fields}
The idea outlined in \S 2.5 can be very concisely summarized as follows: we have, by orthogonality,
  $$\int_{M \setminus B(x_0, n^{-1/d})} \am^{(n)}(x_0,y) \phi_{n+1}(y) dy = -\int_{B(x_0, n^{-1/d})} \am^{(n)}(x_0,y) \phi_{n+1}(y) dy.$$
  However, we also have that if $\phi_{n+1}(x_0) = \| \phi_{n+1}\|_{L^{\infty}},$ then
  $$ \int_{M \setminus B(x_0, n^{-1/d})} \am^{(n)}(x_0,y) \phi_{n+1}(y) dy \sim \frac{\am^{(n)}(x_0, x_0)}{n} \| \phi_{n+1}\|_{L^{\infty}}$$
  which forces $\am^{}(x_0, y)$ and $\phi_{n+1}(y)$ to be nontrivially correlated across $(M,g)$. 
Suppose now for a moment that $\am^{}(x_0, x_0) \sim n$. The local Weyl law
$$ \sum_{k=1}^{n} \phi_k(x_0)^2 = n + o(n)$$
 implies that at least a constant proportion of the first $n$ eigenfunctions satisfy $\left| \phi_k(x_0)\right| \leq 2$. If we were to
take such an eigenfunction and flipped its sign in the definition of $\am^{}(x,y)$, not too much in the above argument would 
change since $\am^{}(x_0, x_0) \sim n$ is still valid. Defining a permutation $\pi: \left\{1,2,\dots, n\right\} \rightarrow \left\{1,2, \dots, n\right\}$ which orders the eigenfunctions in the sense that
$$ \left| \phi_{\pi(1)}(x_0)\right| \leq  \left| \phi_{\pi(2)}(x_0)\right| \leq  \dots \leq \left| \phi_{\pi(n)}(x_0)\right|$$
and could then consider functions $\am^*$ of the type, say,
$$ \am^*(x_0, y) = \sum_{i=1}^{\left\lfloor n/3 \right\rfloor} \varepsilon_i \phi_{\pi(i)}(y) + \sum_{i=\left\lfloor n/3 \right\rfloor}^{n} \sgn( \phi_{\pi(i)}(x_0)) \phi_{\pi(i)}(y),$$
where $\varepsilon_i \in \left\{-1,1\right\}^n$ are now completely arbitrary. 
This will lead to an exponentially large number of functions ($\sim 2^{n/3}$) all of which could be used in the argument above.  In particular, $\phi_{n+1}$ has to have a structured inner product, a global correlation, with respect to \textit{all of them} since any single counterexample would be sufficient to obtain a bound on the eigenfunction $\phi_{n+1}$.

\subsection{An independence heuristic.}
This section describes a type of heuristic that is naturally suggested by these considerations and can be considered as being somewhat dual to Berry's random wave heuristic insofar as it is completely global while the random wave model is completely local. Suppose we are given $m$ distinct points $x_1, \dots, x_m \in M$ on the manifold. We think of these points as fixed as the number of eigenfunctions $n$ tends to infinity. Then, for any fixed $n \in \mathbb{N}$, we define $m$ distinct functions $f_1, \dots, f_m$ where $f_i: \left\{-1,1\right\}^n \rightarrow \mathbb{R}$ is given by, abbreviating $\varepsilon = (\varepsilon_1, \dots, \varepsilon_n) \in \left\{-1,1\right\}^n$,
$$ f_i(\varepsilon) = \frac{1}{\sqrt{n}}\sum_{k=1}^{n} \varepsilon_k \cdot \phi_k(x_i).$$
These $m$ functions $f_1, \dots, f_m$ can be interpreted as random variables defined on the space $\left\{-1,1\right\}^n$ (equipped with
the uniform measure). Because of the local Weyl law, we would expect that for $A \subset \mathbb{R}$ fixed, as $n \rightarrow \infty$,
$$ \mathbb{P}\left( f_i^{-1}(A)\right) = \frac{1}{2^n} \# \left\{\varepsilon \in \left\{-1,1\right\}^n: f_i(\varepsilon) \in A \right\} \rightarrow \frac{1}{\sqrt{2\pi}}\int_A e^{-x^2/2} dx.$$

It seems natural to conjecture that, on generic manifolds, these $m$ random variables behave, asymptotically as $n \rightarrow \infty$, like independent random variables: we would like that, for all $A_1, A_2, \dots, A_m \subset \mathbb{R}$, as $n \rightarrow \mathbb{N}$,
$$ \lim_{n \rightarrow \infty} \mathbb{P}\left( \bigcap_{i=1}^m f_i^{-1}(A_i)\right) = \frac{1}{(2\pi)^{m/2}} \prod_{i=1}^m \int_{A_i} e^{-x^2/2} dx.$$
Such a conjecture goes against spooky action at a distance in a very strong way. One would therefore not expect such a statement to be true on any manifold where explicit computation of eigenfunctions is possible. However, it could be an interesting problem to see whether any result in that direction could be rigorously established for a random basis on $\mathbb{S}^d$ or $\mathbb{T}^d$.

  \begin{center}
 \begin{figure}[h!]
 \begin{tikzpicture}
 \node at (0,0) {\includegraphics[width=0.4\textwidth]{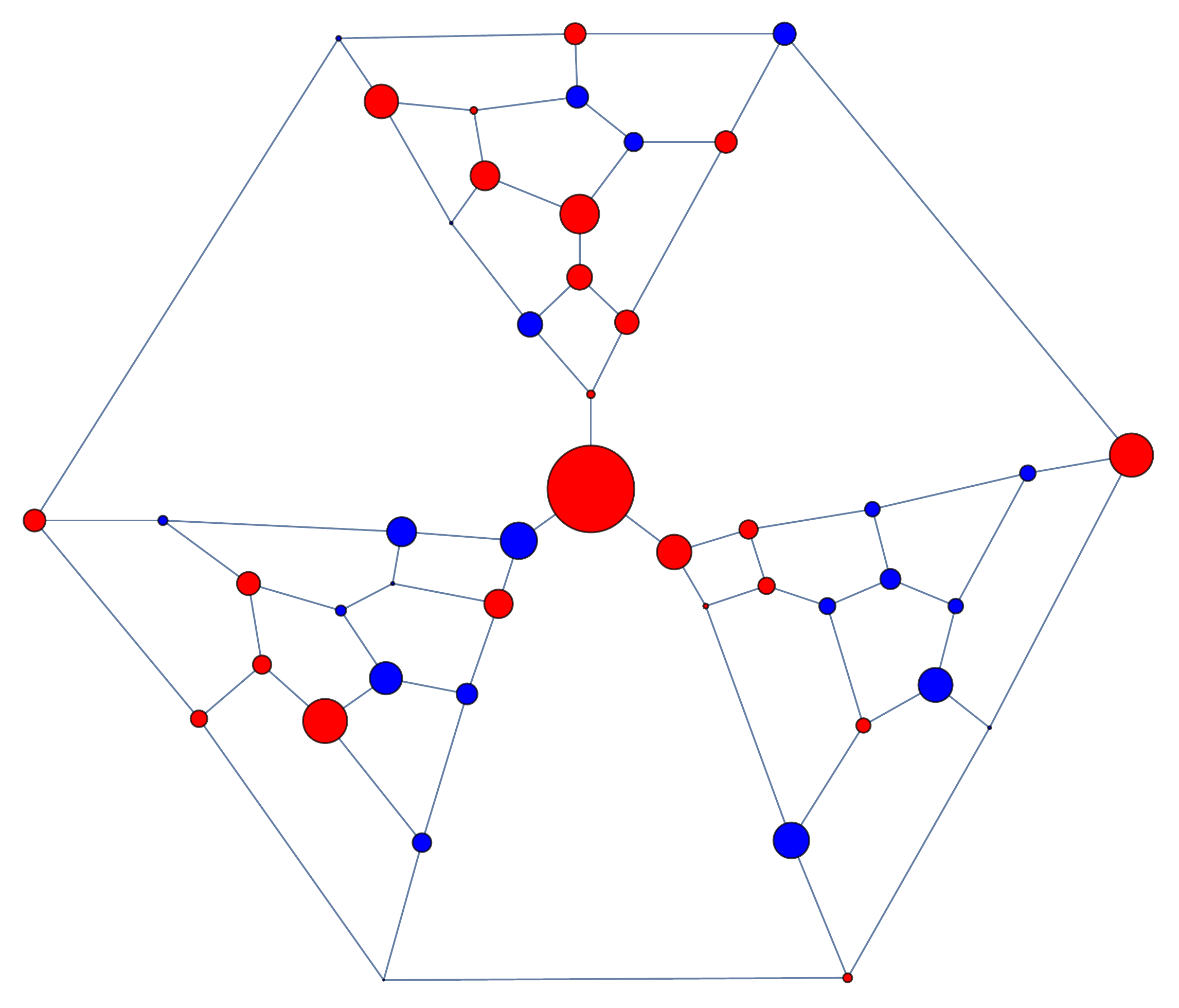}};
  \node at (5.45,0) {\includegraphics[width=0.4\textwidth]{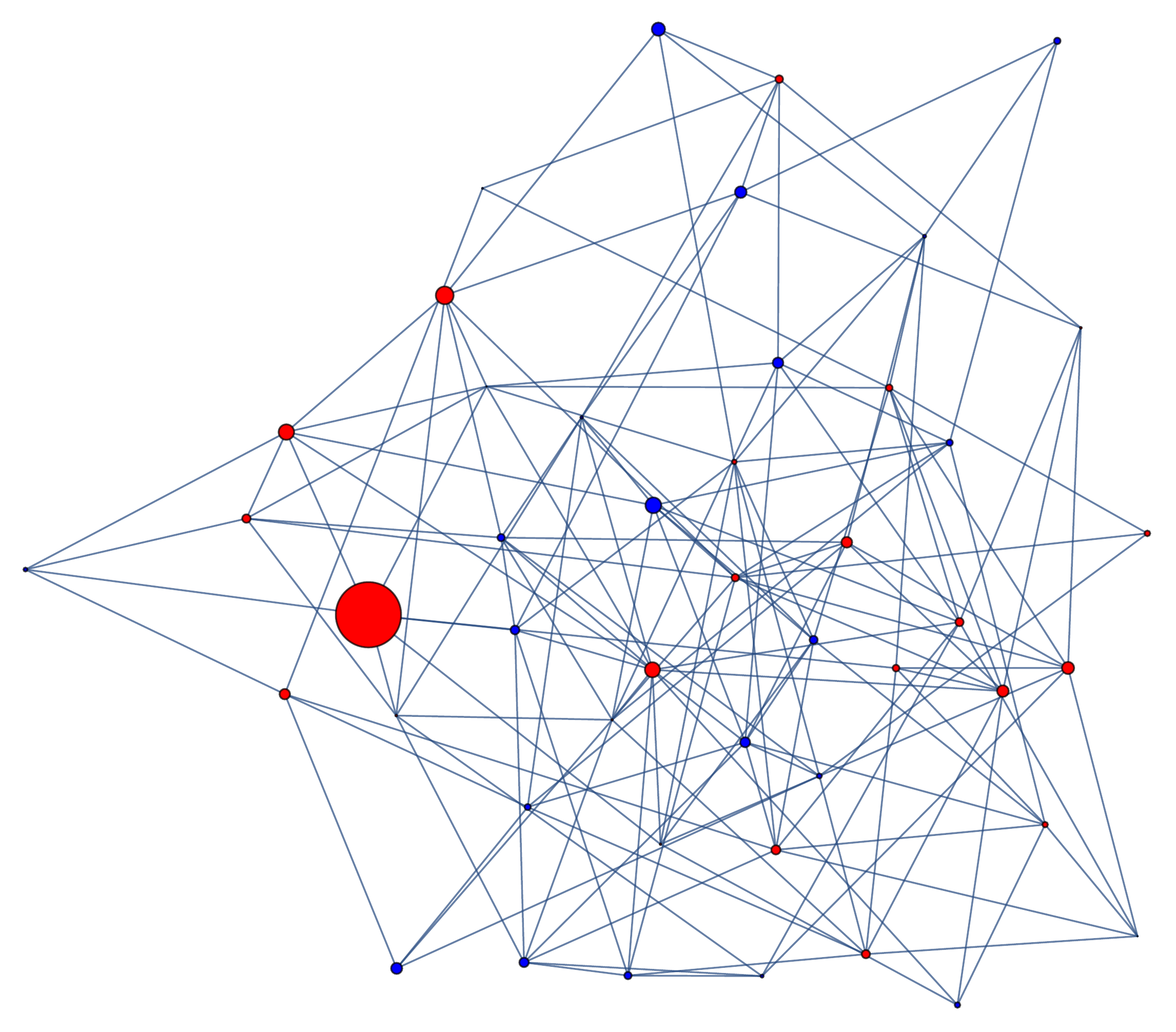}};
 \end{tikzpicture}
 \caption{$\am$ on the Tutte graph (left) and Erd\H{o}s-Renyi random graph (right). Color is determined by the sign, size is proportional to the absolute value. Structured graphs can lead to `non-asymptotic spooky action at a distance' while one would not expect this for unstructured graphs.}
 \label{fig:graph}
 \end{figure}
 \end{center}
 
\subsection{Graph Laplacians.} We note that Laplacian eigenfunctions have a natural analogue on graphs (see \ref{fig:graph}). Let $G=(V,E)$ be a graph on $|V| = n$ vertices. A discrete version of the Laplacian is the Laplacian matrix $ L = D - A$, defined by
$$ L_{ij} = \begin{cases} 
\mbox{deg}(v_i) \qquad &\mbox{if}~i = j  \\
 - 1_{E}(i,j) \qquad &\mbox{otherwise.} 
\end{cases}$$
The matrix $L \in \mathbb{R}^{n \times n}$ is real symmetric and therefore has real eigenvalues and eigenvectors which we denote by $(v_k)_{k=1}^{n}$.  We can thus define 
$$ \am^{(n)}(i, j) = \sum_{k=1}^{n} \sgn(v_k(i)) v_k(j).$$
Since our original argument connecting $\am$ and eigenfunction growth relied mainly orthogonality, it still can, in principle, be applied. In practice, it becomes a lot harder to understand even basic notions like the diagonal behavior $\am^{(n)}(i,i)$. Nonetheless, we believe that the underlying philosophy should still apply: eigenvector growth requires some underlying symmetries in the graph, possibly a degree of spooky action at a distance, which one would not expect at all to be the case for, say, random graphs (which are a particularly convenient model of a `generic' manifold). This is also aligned with the basic intuition that eigenfunction growth on graphs should be a rare phenomenon and possibly quite unstable since adding or removing a single edge may completely destroy certain symmetries.

\section{Proofs of the Propositions}

\subsection{Proof of Proposition 1}
\begin{proof}
We want to show that
\begin{align*}
\forall x \in M \qquad \int_M \am(x,y)^2 dy &\leq n \qquad \mbox{and} \qquad \int_M\int_M \am(x,y)^2 dy = n.
\end{align*}
These identities follow from basic orthogonality. Since
$$ \am(x,y) = \sgn(\phi_1(x)) \phi_1(y) +  \sgn(\phi_2(x)) \phi_2(y) + \dots +  \sgn(\phi_n(x)) \phi_n(y)$$
and these functions are orthonormal in $L^2$, we simply have
$$  \int_M \am(x,y)^2 dy = \sum_{k=1}^{n} \sgn(\phi_k(x))^2 \leq n.$$
The sum might be less than $n$ since in our definition $\sgn(0) = 0$. However, the set where the eigenfunctions
vanish is a set of measure 0 and since $\vol(M)=1$,
$$ \int_M  \sum_{k=1}^{n} \sgn(\phi_k(x))^2 dx= n.$$
\end{proof}

\subsection{Proof of Proposition 2}
\begin{proof}
Using $\vol(M) = 1$ and the $L^2-$normalization, 
\begin{align*}
\int_{M}  \am(x,x) dx =    \int_{M}  \sum_{k=1}^{n} |\phi_k(x)| dx = \sum_{k=1}^{n} \| \phi_k\|_{L^1} \leq \sum_{k=1}^{n} \| \phi_k\|_{L^2}  = n.
\end{align*}
The local Weyl law shows
$$ \sum_{k=1}^{n} |\phi_k(x)|  \leq \sqrt{n} \left( \sum_{k=1}^{n} \phi_k(x)^2 \right)^{1/2} \leq \sqrt{n} \sqrt{ n + \mathcal{O}(n^{\frac{d-1}{d}})} = n +  \mathcal{O}(n^{\frac{d-1}{d}}).$$
It remains to prove the lower bound
$$   \sum_{k=1}^{n} |\phi_k(x)| ~ \gtrsim_{(M,g)}~ n^{\frac{d+1}{2d}}.$$
Using the local Weyl law in combination with H\"ormander's bound, we arrive that
\begin{align*} n + \mathcal{O}(n^{\frac{d-1}{d}}) &=  \sum_{k=1}^{n} \phi_k(x)^2 \\
&\leq \left(\max_{1 \leq k \leq n} \|\phi_k\|_{L^{\infty}}\right) \sum_{k=1}^{n} |\phi_k(x)| \\
&\lesssim n^{\frac{1}{2} - \frac{1}{2d}} \cdot \sum_{k=1}^{n} |\phi_k(x)| 
\end{align*}
from which the desired claim follows.
\end{proof}

We also give an elementary proof of the weaker inequality
$$ \sum_{k=1}^{n} |\phi_k(x)|  \lesssim_d  n,$$
where the implicit constant depends only on the dimension. The advantage of the proof is that it is
purely elementary and avoids the local Weyl law (and some of the arguments will be useful in other settings).
\begin{proof}
We aim to derive the upper bound 
$$ \max_{x \in M} \am^{(n)}(x,x) \lesssim n.$$
Recalling
$$\int_{M} \am^{(n)}(x,x) dx = \int_M \sum_{k=1}^{n} |\phi_k(x)| dx = \sum_{k=1}^{n} \| \phi_k\|_{L^1} \leq \sum_{k=1}^{n} \| \phi_k\|_{L^2} \leq n,$$
 we see that, typically, $\am^{}(x,x)$ cannot be much larger than $n$.
For simple reasons of continuity, we expect that $\am^{}(x,y)$ should be comparable to $\am^{}(x,x)$ as long as $x$ is within one wavelength of $y$. Indeed, 
using $p_t(\cdot, \cdot)$ to denote the heat kernel associated to the eigenvalue problem,
\begin{align*}
 \int_{M} p_t(x,y) \am^{(n)}(x,y) dy &= \sum_{k=1}^{n} \sgn( \phi_k(x)) \int_M p_t(x,y) \phi_k(y) dx \\
 &= \sum_{k=1}^{n} \sgn( \phi_k(x)) e^{-\lambda_k t} \phi_k(x) \\
 &=  \sum_{k=1}^{n}e^{-\lambda_k t}  |\phi_k(x)| 
\geq e^{-\lambda_n t} \am(x,x).
 \end{align*}
 Suppose now that $\am^{(n)}(x,x)$ assumes its maximum in $x_0$. We will use
 $$ e^{-\lambda_n t} \am^{(n)}(x_0,x_0) \leq  \int_{M} p_t(x_0,x) \am^{(n)}(x_0,x) dx.$$
Introducing the set $A_t \subset M$ defined by
$$ A = \left\{x \in M: \am^{(n)}(x_0,x)\geq \frac{e^{-\lambda_n t}}{10} \am^{(n)}(x_0,x_0)  \right\}$$
allows us to argue that
 \begin{align*}
  e^{-\lambda_n t} \am^{}(x_0,x_0) &\leq  \int_{M} p_t(x_0,x) \am^{}(x_0,x) dx \\
  &= \int_{A_t} p_t(x_0,x) \am^{}(x_0,x) dx + \int_{M \setminus A_t} p_t(x_0,x) \am^{}(x_0,x) dx \\
  &\leq  \int_{A_t} p_t(x_0,x) \am^{}(x_0,x) dx + \frac{e^{-\lambda_n t}}{10} \am^{}(x_0,x_0)\int_{M \setminus A_t} p_t(x_0,x)  dx \\
  &\leq \int_{A_t} p_t(x_0,x) \am^{}(x_0,x) dx + \frac{e^{-\lambda_n t}}{10} \am^{}(x_0,x_0)
  \end{align*}
  and therefore
 $$  \int_{A_t} p_t(x_0,x) \am^{}(x_0,x) dx \geq \frac{9}{10} e^{-\lambda_n t} \am^{}(x_0,x_0).$$
 Combining this with the standard heat kernel bound
 $ p_t(\cdot, \cdot) \lesssim t^{-d/2}$
 implies 
 $$  \int_{A_t} \am^{}(x_0,x) dx \gtrsim e^{-\lambda_n t} t^{d/2} \am^{}(x_0,x_0).$$
 We now fix $t = 1/\lambda_n$. Then, using Weyl's law,
 $$ t^{d/2} = \lambda_n^{-d/2} \sim (n^{\frac{2}{d}})^{-\frac{d}{2}} \sim 1/n$$
 we see that for this choice of $t$ together with Cauchy-Schwarz
 $$  \frac{e^{-1}}{n}  \am^{}(x_0,x_0) \lesssim \int_{A_t} \am^{}(x_0,x) dx \leq |A_t|^{1/2} \left(  \int_{A_t} \am(x_0,x)^2 dx \right)^{1/2}  $$
We note that, due to $L^2-$normalization and orthogonality
$$ \int_{A_t} \am(x_0,x)^2 dx \leq \int_M \am(x_0,x)^2 dx \leq n$$
 which has two implications. First, it implies
  $$  \amalg(x_0,x_0) \lesssim  n^{3/2} |A_t|^{1/2}.$$
Secondly, recalling the definition of $A_t$, it shows that
$$n \geq  \int_{A_t} \amalg(x_0,x)^2 dx \geq \frac{\amalg(x_0,x_0)^2}{100} |A_t|$$
from which we infer
  $$  \amalg(x_0, x_0) \lesssim  n^{3/2} |A_t|^{1/2} \lesssim n^{3/2} \frac{\sqrt{n}}{\amalg(x_0,x_0)}$$
which is the desired result.
\end{proof}

\subsection{Proof of Proposition 3}
\begin{proof}
The local behavior of $\phi_i(x)$ around $x = \pi/2$ is simply given as follows
$$ \phi_i(x) \sim \begin{cases} 
 i (x- \pi/2) \qquad &\mbox{if}~i \equiv 0~(\mbox{mod}~4) \\
1 - \frac{i^2}{2} (x-\pi/2)^2 \qquad &\mbox{if}~i \equiv 1~(\mbox{mod}~4) \\
- i (x- \pi/2) \qquad &\mbox{if}~i \equiv 2~(\mbox{mod}~4) \\
-1 + \frac{i^2}{2} (x-\pi/2)^2 \qquad &\mbox{if}~i \equiv 3~(\mbox{mod}~4) 
\end{cases}$$
 where the expansions are accurate up to second order for $|x - \pi/2| \lesssim n^{-1}$. We will now consider the choice
 $$ x = \frac{\pi}{2} + \varepsilon,$$
 where $0 < \varepsilon \ll n^{-10}$. We first discuss a rough heuristic computation to get a sense for how things scale before making everything rigorous. We expect
 $$ \am^{(n)}(x,x) = \frac{n}{2} + o(n)$$
 and
\begin{align*}
 \am^{(n)}(x,y) &= \sum_{i=1 \atop i \equiv 0 ~(\tiny \mbox{mod}~4)}^{n} \sin{(iy)} + \sum_{i=1 \atop i \equiv 1 ~(\tiny \mbox{mod}~4)}^{n} \sin{(iy)} \\
 &- \sum_{i=1 \atop i \equiv 2 ~(\tiny \mbox{mod}~4)}^{n} \sin{(iy)} -  \sum_{i=1 \atop i \equiv 3 ~(\tiny \mbox{mod}~4)}^{n} \sin{(iy)}.
 \end{align*}
 We start with a local Taylor expansion up to second order
\begin{align*} \am^{(n)}(x,y) &\sim  \sum_{i=1 \atop i \equiv 0 ~(\tiny \mbox{mod}~2)}^{n} i (y - \pi/2) +  \sum_{i=1 \atop i \equiv 1 ~(\tiny \mbox{mod}~2)}^{n} 1- \frac{i^2}{2}(y-\pi/2)^2\\
&\sim \frac{n}{2} +  (y-\pi/2) \left(  \sum_{i=1 \atop i \equiv 0 ~(\tiny \mbox{mod}~2)}^{n} i \right) - \left(y - \frac{\pi}{2}\right)^2   \sum_{i=1 \atop i \equiv 1 ~(\tiny \mbox{mod}~2)}^{n} \frac{i^2}{2} \\
&\sim \frac{n}{2} +  \left(y - \frac{\pi}{2}\right) \frac{n^2}{4} -  \left(y - \frac{\pi}{2}\right)^2 \frac{n^3}{12}.
\end{align*}
which suggests that the $y - \pi/2 \sim c/n$. At that scale, higher-order terms in the Taylor expansion would contribute to the factor. We thus write, for $j \in \left\{0,2\right\}$ with $\varepsilon_0 = 1$ and $\varepsilon_2 = -1$ that
\begin{align*}
  \varepsilon_j \sum_{i=1 \atop i \equiv j ~(\tiny \mbox{mod}~4)}^{n} \sin{(iy)} &= \sum_{i=1 \atop i \equiv j ~(\tiny \mbox{mod}~4)}^{n} \sum_{\ell=1 \atop \ell~\mbox{\tiny odd}}^{\infty} (-1)^{\frac{\ell+3}{2}}\frac{i^{\ell}}{\ell!} \left( y - \frac{\pi}{2} \right)^{\ell} \\
  &=   \sum_{\ell=1 \atop \ell~\mbox{\tiny odd}}^{\infty}(-1)^{\frac{\ell+3}{2}}  \left( y - \frac{\pi}{2} \right)^{\ell}
\frac{1}{\ell!}  \sum_{i=1 \atop i \equiv j ~(\tiny \mbox{mod}~4)}^{n}i^{\ell}.
 \end{align*}
 Likewise, for $j \in \left\{1,3\right\}$ with $\varepsilon_1 = 1$ and $\varepsilon_3 = -1$, we have
 
 \begin{align*}
  \varepsilon_j \sum_{i=1 \atop i \equiv j ~(\tiny \mbox{mod}~4)}^{n} \sin{(iy)} &= \sum_{i=1 \atop i \equiv j ~(\tiny \mbox{mod}~4)}^{n}  1 - \sum_{\ell=1 \atop \ell~\mbox{\tiny even}}^{\infty} (-1)^{\frac{\ell+2}{2}} \frac{i^{\ell}}{\ell!} \left( y - \frac{\pi}{2} \right)^{\ell} \\
  &=  \mathcal{O}(1) + \frac{n}{4} - \sum_{\ell=1 \atop \ell~\mbox{\tiny even}}^{\infty} (-1)^{\frac{\ell+2}{2}} \left( y - \frac{\pi}{2} \right)^{\ell}
\frac{1}{\ell!}  \sum_{i=1 \atop i \equiv j ~(\tiny \mbox{mod}~4)}^{n}i^{\ell}.
 \end{align*}
 
Making the ansatz $x = \pi/2 + c/n$ as suggested by the first two terms of the Taylor expansion, we arrive at
\begin{align*}
  \varepsilon_j \sum_{i=1 \atop i \equiv j ~(\tiny \mbox{mod}~4)}^{n} \sin{(iy)} &=  \sum_{\ell=1 \atop \ell~\mbox{\tiny odd}}^{\infty} (-1)^{\frac{\ell+3}{2}} \left( y - \frac{\pi}{2} \right)^{\ell}
\frac{1}{\ell!}  \sum_{i=1 \atop i \equiv j ~(\tiny \mbox{mod}~4)}^{n}i^{\ell}\\
&= \sum_{\ell=1 \atop \ell~\mbox{\tiny odd}}^{\infty}(-1)^{\frac{\ell+3}{2}} \frac{c^{\ell}}{n^{\ell}}
\frac{1}{\ell!} \left( \frac{n^{\ell + 1}}{4(\ell + 1)} + \mathcal{O}(n^{\ell}) \right) \\
&= \mathcal{O}(1) + \frac{n}{4} \sum_{\ell=1 \atop \ell~\mbox{\tiny odd}}^{\infty} (-1)^{\frac{\ell+3}{2}}\frac{c^{\ell}}{(\ell+1)!}\\
&=  \mathcal{O}(1) + \frac{n}{4 c}  \sum_{\ell=2 \atop \ell~\mbox{\tiny even}}^{\infty} (-1)^{\frac{\ell + 2}{2}} \frac{c^{\ell}}{\ell!} \\
&=   \mathcal{O}(1) + \frac{n}{4 c} (1-\cos{c}).
 \end{align*}
 Likewise, for the second sum, we arrive at
 \begin{align*}
  \varepsilon_j \sum_{i=1 \atop i \equiv j ~(\tiny \mbox{mod}~4)}^{n} \sin{(iy)} &=  \mathcal{O}(1) + \frac{n}{4} - \sum_{\ell=1 \atop \ell~\mbox{\tiny even}}^{\infty} (-1)^{\frac{\ell+2}{2}} \frac{c^{\ell}}{n^{\ell}}
\frac{1}{\ell!}  \sum_{i=1 \atop i \equiv j ~(\tiny \mbox{mod}~4)}^{n}i^{\ell} \\
&=  \mathcal{O}(1) + \frac{n}{4} - \sum_{\ell=1 \atop \ell~\mbox{\tiny even}}^{\infty} (-1)^{\frac{\ell+2}{2}} \frac{c^{\ell}}{n^{\ell}}
\frac{1}{\ell!}  \left( \frac{1}{4} \frac{n^{\ell+1}}{(\ell+1)} + \mathcal{O}(n^{\ell}) \right) \\
&=   \mathcal{O}(1) + \frac{n}{4} - \frac{n}{4c} \sum_{\ell=1 \atop \ell~\mbox{\tiny even}}^{\infty} (-1)^{\frac{\ell+2}{2}} \frac{c^{\ell+1}}{(\ell+1)!}\\
&=   \mathcal{O}(1) + \frac{n}{4} - \frac{n}{4c} (c-\sin{c}).
 \end{align*}
 Summing over all four cases, we get
 $$ \am^{(n)}(x,y) = \mathcal{O}(1) + \frac{n}{2} + \frac{n}{4c}\left(2 - 2\cos{c} -2c +2\sin{c}\right).$$
 This expression is maximized for $c \sim 1.1750\dots$ for which we have
 $$ \am^{(n)}(x,y) \sim 1.30821 \cdot \am^{(n)}(x,x).$$
\end{proof}

\section{Proof of Theorem 1}
This section describes the proof of Theorem 1, the existence of spooky action at a distance for the unit interval with Dirichlet or Neumann boundary conditions and spooky action for the circle $\mathbb{S}^1$. We also discuss some aspects of $\am$ on $\mathbb{S}^2$ that are relevant for eigenfunction concentration (see the remarks in \S 2.6).

\subsection{The circle $\mathbb{S}^1$} This is already a very interesting case. Consider $\mathbb{S}^1 \cong [0,2\pi]$ with endpoints identified. The eigenvalues all have multiplicity 2:  there is no unique canonical basis. Indeed, as we will see, the behavior of $\am$ depends strongly on the basis chosen (which is only natural since the behavior of eigenfunctions $\phi_k$ depends on the basis of eigenfunction and not merely on the manifold). We start by analyzing the canonical basis.\\

\textit{The canonical basis.} We are interested in the behavior of
$$ \am^{(2n)}(x,y) = \sum_{k=1}^{n} \sgn(\sin(kx)) \sin(ky) + \sum_{k=1}^{n} \sgn(\cos(kx)) \cos(ky) $$

\begin{proposition}
The canonical basis exhibits spooky action at a distance: we have
$$ \am^{(2n)}\left(\frac{2 \pi}{3}, 0\right) = - \frac{n}{3} + \mathcal{O}(1).$$
\end{proposition}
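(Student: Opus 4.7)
The plan is to simply evaluate $\am^{(2n)}(2\pi/3, 0)$ directly using the explicit formula and reduce the problem to counting residues modulo $3$. Substituting $y = 0$ into
$$ \am^{(2n)}(x,y) = \sum_{k=1}^{n} \sgn(\sin(kx)) \sin(ky) + \sum_{k=1}^{n} \sgn(\cos(kx)) \cos(ky), $$
the first sum vanishes because $\sin(0) = 0$, and the second sum collapses to $\sum_{k=1}^{n} \sgn(\cos(kx))$ since $\cos(0) = 1$. Thus it suffices to understand
$$ \am^{(2n)}(2\pi/3, 0) = \sum_{k=1}^{n} \sgn\!\left(\cos\!\left(\tfrac{2\pi k}{3}\right)\right). $$

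Next I would split the sum according to the residue class of $k$ modulo $3$. A direct computation gives $\cos(2\pi k/3) = 1$ when $k \equiv 0$, and $\cos(2\pi k/3) = -1/2$ when $k \equiv 1$ or $k \equiv 2 \pmod{3}$. Consequently $\sgn(\cos(2\pi k/3)) = +1, -1, -1$ as $k$ runs through any three consecutive integers, contributing $-1$ per block of three. Hence any complete block of three consecutive integers contributes exactly $-1$ to the sum, so $\sum_{k=1}^{3m} \sgn(\cos(2\pi k/3)) = -m$.

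For general $n$, writing $n = 3m + r$ with $r \in \{0,1,2\}$ and observing that the residual $r$ terms contribute at most $2$ in absolute value yields
$$ \am^{(2n)}(2\pi/3, 0) = -m + \mathcal{O}(1) = -\frac{n}{3} + \mathcal{O}(1), $$
which is the claim. There is no significant obstacle: the only point one has to be mildly careful about is the convention $\sgn(0) = 0$, but this never occurs since $\cos(2\pi k/3) \in \{1, -1/2\}$ is always nonzero for integer $k$. The argument is thus essentially a one-line residue count, and the strong spooky action at a distance (in the sense of Definition in \S 2.2) is then immediate because $|\am^{(2n)}(2\pi/3, 0)|/(2n) \to 1/6 > 0$.
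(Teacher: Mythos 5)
Your proof is correct and follows exactly the paper's own argument: substitute $y=0$ so only the cosine sum survives, observe $\cos(2\pi k/3)\in\{1,-1/2\}$ according to $k \bmod 3$, and count signs to get $-n/3+\mathcal{O}(1)$. The closing remark that this yields \emph{strong} spooky action since $|\am^{(2n)}(2\pi/3,0)|/(2n)\to 1/6>0$ is also correct.
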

\begin{proof}
Since $\sin{(k\cdot 0)} = 0$, only the first sum involving the cosine remains. We note that
$$ \cos{\left(k \cdot \frac{2\pi}{3}\right)} = \begin{cases} 1 \qquad &\mbox{if}~k \equiv 0~(\mbox{mod}~3) \\
-\frac{1}{2} \qquad &\mbox{otherwise.} \end{cases}$$
Thus, since $\cos{(k \cdot 0)} = 1$, we end up with
$$ \am^{(2n)}\left(\frac{2 \pi}{3}, 0\right) = \sum_{k=1}^{n} \sgn\left(\cos{\left(k \cdot \frac{2\pi}{3}\right)}\right)   = - \frac{n}{3} + \mathcal{O}(1).$$
\end{proof}

\textit{A randomized basis.} Due to the symmetries of $\mathbb{S}^1$, we may replace the pair of eigenfunctions $\sin{(kx)}$ and $\cos{(kx)}$ (both corresponding to eigenvalue $k^2$) by
$$\sin{(k(x-x_k))} \quad \mbox{and} \quad \cos{(k(x-x_k))},$$
where $x_k \in \mathbb{S}^1$ is completely arbitrary. This allows for a multitude of bases to be considered. Let us consider now explicitly
\begin{align*}
 \am^{(2n)}(x,y) &= \sum_{k=1}^{n} \sgn(\sin(k(x-x_k))) \sin(k(y-x_k)) \\
 &+ \sum_{k=1}^{n} \sgn(\cos(k(x-x_k))) \cos(k(y-x_k)),
 \end{align*}
where $x_1, \dots, x_n$ are sampled uniformly at random from $[0,2\pi]$. This model is simple enough that
explicit computations can be carried out. 

\begin{proposition} For this type of random basis, we have
$$ \mathbb{E} ~\am^{(2n)}(x,x) = \frac{4}{\pi} n.$$
For all $x \neq y$, we have (uniformly in $n$)
$$  \mathbb{E} ~\left|\am^{(2n)}(x,y)\right| \lesssim \frac{1}{|x-y|}.$$
We also have, for two universal constants $c_1, c_2$ and any $x \in \mathbb{S}^1$, that
$$ \mathbb{P}\left( \max_{|y-x| \geq 1/\sqrt{n}} \left| \am^{(2n)}(x,y)\right| \leq  c_1(\log{n})^{c_2}\sqrt{n}  \right) \rightarrow 1.$$
\end{proposition}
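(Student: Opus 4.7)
The plan is to exploit the independence of the shifts $x_1,\dots,x_n$ together with the fact that, for every positive integer $k$, the random phase $k x_k \pmod{2\pi}$ is uniformly distributed on $[0,2\pi]$. This reduces $\am^{(2n)}(x,y)$ to a sum of $n$ independent bounded summands indexed by $k$, and each of the three assertions then follows from a separate elementary tool. For the first identity, I would write
\[ \am^{(2n)}(x,x) = \sum_{k=1}^{n} \bigl(|\sin(k(x-x_k))| + |\cos(k(x-x_k))|\bigr), \]
and observe that by the uniformity of $k(x-x_k)\pmod{2\pi}$ each summand has mean $\tfrac{1}{2\pi}\int_0^{2\pi}|\sin u|\,du=2/\pi$, so summing gives $\mathbb{E}\,\am^{(2n)}(x,x) = 4n/\pi$.

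For the off-diagonal bound, set $\theta_k = k(y-x)$ and $u = k(x-x_k)$ and use angle-addition to rewrite the $k$-th summand as
\[ X_k = \cos\theta_k\,\bigl(|\sin u|+|\cos u|\bigr) + \sin\theta_k\,\bigl(\sgn(\sin u)\cos u - \sgn(\cos u)\sin u\bigr). \]
A short symmetry check over $[0,2\pi]$ shows the second bracket has mean zero, while the first has mean $4/\pi$, so $\mathbb{E} X_k = \tfrac{4}{\pi}\cos\theta_k$ and consequently
\[ \mathbb{E}\,\am^{(2n)}(x,y) = \frac{4}{\pi}\sum_{k=1}^{n}\cos(k(y-x)) = \frac{2\sin\bigl((n+\tfrac12)(y-x)\bigr)}{\pi\sin((y-x)/2)} - \frac{2}{\pi}. \]
The classical Dirichlet-kernel estimate $|\csc(\theta/2)|\lesssim 1/|\theta|$ then gives $|\mathbb{E}\,\am^{(2n)}(x,y)|\lesssim 1/|x-y|$; together with the variance bound $\mathrm{Var}(\am^{(2n)}(x,y))=O(n)$ obtained in the next step, this yields the asserted control on $\mathbb{E}\,|\am^{(2n)}(x,y)|$.

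For the uniform high-probability claim, the deterministic bound $|X_k|\leq \sqrt 2$ lets Hoeffding's inequality yield, for fixed $y$,
\[ \mathbb{P}\bigl(|\am^{(2n)}(x,y)-\mathbb{E}\,\am^{(2n)}(x,y)|\geq t\bigr)\leq 2\exp(-t^2/(4n)). \]
Choosing $t = C\sqrt{n\log n}$ and a uniform net $\mathcal{N}\subset\mathbb{S}^1$ of cardinality $\asymp n^3$, a union bound shows the deviation is $O(\sqrt{n\log n})$ simultaneously on $\mathcal{N}$ with probability $\to 1$; to pass from $\mathcal{N}$ to all $y$, the crude Lipschitz estimate $|\partial_y X_k|\leq k\sqrt 2$ gives $\|\partial_y\am^{(2n)}\|_{L^\infty}\lesssim n^2$, so $\am^{(2n)}$ varies by at most $n^{-1}$ across any net cell. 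Combined with $|\mathbb{E}\,\am^{(2n)}(x,y)|\lesssim 1/|y-x|\lesssim \sqrt n$ on $\{|y-x|\geq 1/\sqrt n\}$ from the previous step, this gives the advertised bound. The main difficulty is exactly the bookkeeping in this third part: the individual $X_k$ have nonzero oscillating means $\tfrac{4}{\pi}\cos(k(y-x))$, so the proof must keep the \emph{deterministic} Dirichlet contribution (Part (ii)) cleanly apart from the \emph{random} Hoeffding fluctuations; once that separation is in place the covering/union-bound machinery is standard and the wasteful Lipschitz constant is comfortably absorbed by the Hoeffding exponent.
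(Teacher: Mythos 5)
Your approach is essentially the paper's: integrate out the uniform shift $x_k$ to compute $\mathbb{E}\,\am^{(2n)}$, recognize the Dirichlet kernel, and combine bounded-difference concentration with a Lipschitz bound and a net for the uniform high-probability statement. Parts (i) and (iii) are correct and match the paper (your angle-addition computation of $\mathbb{E}X_k=\tfrac{4}{\pi}\cos(k(y-x))$ is in fact more careful with the $\tfrac{1}{2\pi}$ normalization than the paper's, which carries an extraneous factor).

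The one step that does not work as written is your claim in part (ii) that the bound $|\mathbb{E}\,\am^{(2n)}(x,y)|\lesssim 1/|x-y|$ together with $\mathrm{Var}=O(n)$ ``yields the asserted control on $\mathbb{E}\,|\am^{(2n)}(x,y)|$.'' It does not: the triangle inequality only gives
$$\mathbb{E}\left|\am^{(2n)}(x,y)\right| \leq \left|\mathbb{E}\,\am^{(2n)}(x,y)\right| + \sqrt{\mathrm{Var}\left(\am^{(2n)}(x,y)\right)} \lesssim \frac{1}{|x-y|} + \sqrt{n},$$
which is not uniform in $n$. Indeed, the bound with the absolute value \emph{inside} the expectation is false for fixed $x\neq y$ as $n\to\infty$: the centered sum is a sum of $2n$ independent, bounded, non-degenerate random variables, so its $L^1$ norm is $\asymp\sqrt{n}$ by the central limit theorem, whence $\mathbb{E}\,|\am^{(2n)}(x,y)|\gtrsim\sqrt{n}$. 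The proposition should be read with the absolute value outside, $|\mathbb{E}\,\am^{(2n)}(x,y)|\lesssim 1/|x-y|$, which is exactly what the paper's own proof establishes (it passes from the explicit Dirichlet-kernel formula for $\mathbb{E}\,\am$ directly to the stated inequality). So rather than trying to patch this with a variance argument, you should simply prove the bound for $|\mathbb{E}\,\am^{(2n)}(x,y)|$, which your computation already does.
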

\begin{proof}
The diagonal statement follows from 
$$ \int_0^{2\pi} |\sin{(x)}| dx = 4.$$
The off-diagonal part is based on the identities
\begin{align*}
 \int_0^{2\pi} \sgn(\sin(k(x-x_k))) \sin(k(y-x_k))  d x_k &= 4 \cos{(k \cdot (y-x))} \\
  \int_0^{2\pi} \sgn(\cos(k(x-x_k))) \cos(k(y-x_k))  d x_k &= 4 \cos{(k \cdot (y-x))}.
\end{align*}

  \begin{center}
 \begin{figure}[h!]
 \begin{tikzpicture}
 \node at (0,0) {\includegraphics[width=0.4\textwidth]{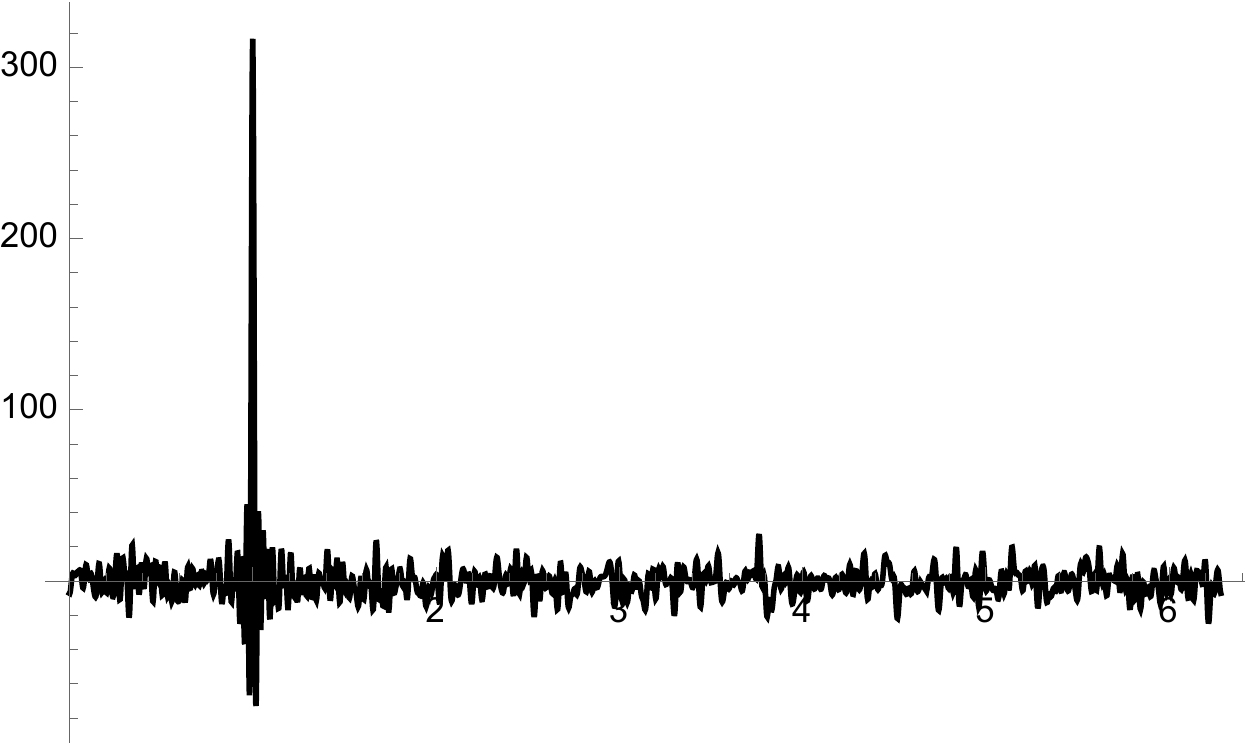}};
  \node at (6,0) {\includegraphics[width=0.4\textwidth]{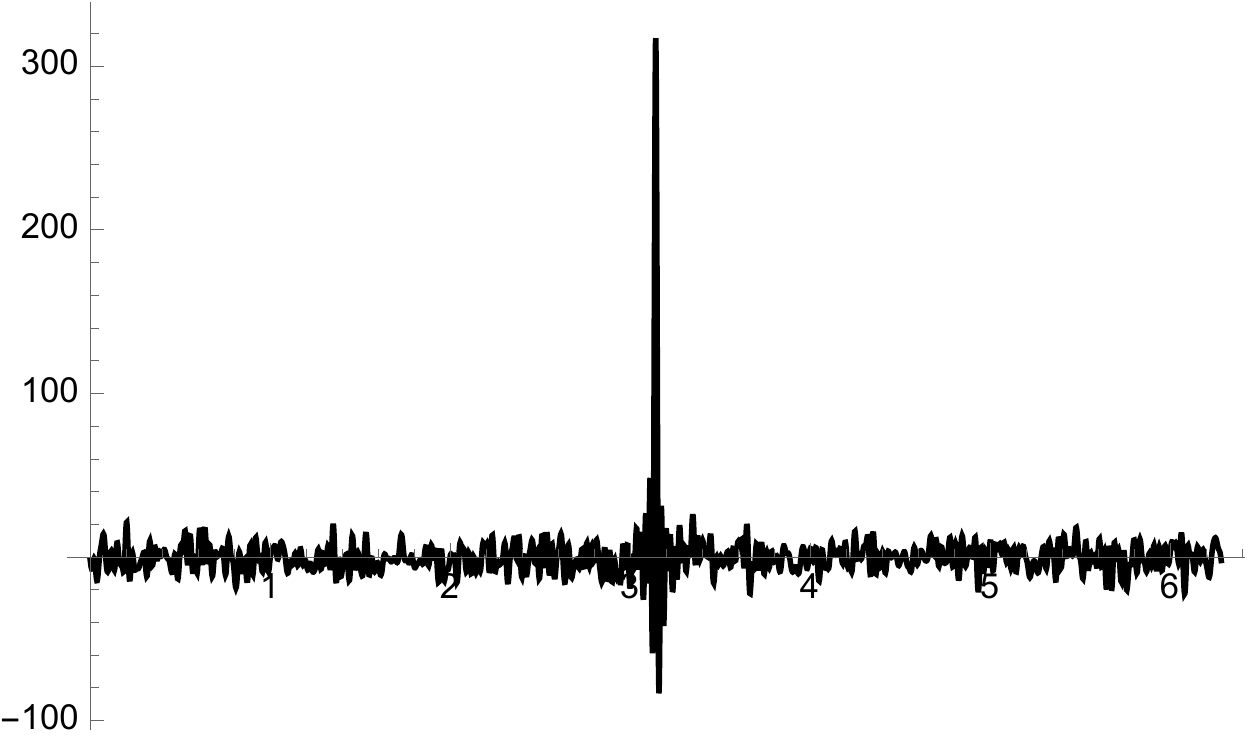}};
 \end{tikzpicture}
 \caption{Randomizing the basis destroys spooky actions: $\am^{(250)}(1,y)$ (left) and $\am^{(250)}(\pi, y)$ (right) for a fixed randomization of the Fourier basis.}
  \label{fig:destrospooky}
 \end{figure}
 \end{center}

From this we infer that
$$ \mathbb{E} \am^{(2n)}(x,y) = 8 \sum_{k=1}^{n} \cos{(k \cdot (x-y))}.$$
This sum is related to the classical Dirichlet kernel 
$$ \mathbb{E} \am^{(2n)}(x,y) = 8 \left( \frac{\sin{((n+1/2)(y-x))}}{\sin{(x/2)}} - 1\right).$$
We note that this explicit formula immediately implies that
$$  \mathbb{E} ~\left|\am^{(2n)}(x,y)\right| \lesssim \frac{1}{|x-y|}$$
as desired. For the concentration bound, we deal with both sums in isolation. Note that, for $x$ and $y \neq x$ fixed and $|y-x| \geq 1/\sqrt{n}$, we have
$$ \mathbb{E} \left|\sum_{k=1}^{n} \sgn(\sin(k(x-x_k))) \sin(k(y-x_k)) \right| \lesssim \sqrt{n}$$
from the above considerations. Moreover, since each of the summands is an independent random variable with variance bounded by $\sim 1$, we get that they are all tightly concentrated around the mean. The function
$$ f(y) = \sum_{k=1}^{n} \sgn(\sin(k(x-x_k))) \sin(k(y-x_k))$$
satsfies
$$ \|  f' \|_{L^{\infty}} \leq \sum_{k=1}^{n} k \leq n^2.$$
This means that if we want to enforce uniform smallness like $\|f\|_{L^{\infty}} \lesssim \sqrt{n \log{n}}$, it suffices to control whether this is satisfied at $\sim n^{3/2}$ equispaced points. The maximum of $\sim n^{3/2}$ Gaussian random variables is $\sim \log{n}$ and the result follows from the union bound.
\end{proof}

We have not tried to optimize the constants.
A similar approach might conceivably be possible on $\mathbb{T}^d$ or $\mathbb{S}^d$ (although a careful analysis might be much more difficult). This is much in line with the standard philosopy that on manifolds where eigenspaces have large multiplicity, a random basis should be representative of the behavior of `generic' eigenfunctions on `generic' manifolds.

 \subsection{The unit interval $[0,\pi]$.} The example in the previus section immediately implies spooky action at a distance for the unit interval
with Neumann boundary conditions. It remains to consider the unit interval with Dirichlet boundary conditions. The (unique) basis of
eigenfunctions is given by the sines and thus 
$$ \am^{(n)}(x,y) = \sum_{k=1}^{n} \sgn(\sin{(k  x)}) \sin{(k  y)}.$$

\begin{proposition} Suppose $0 < x < \pi/3$ and $x/\pi$ is irrational. Then
$$  \am^{(n)}(x,x) = \frac{2n}{\pi} + o(n)$$
and
$$ \am^{(n)}(x,3x) = \frac{2n}{3\pi} + o(n).$$
\end{proposition}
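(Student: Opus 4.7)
The approach is to reduce both statements to applications of Weyl's equidistribution theorem. Since $x/\pi$ is irrational, the sequence $(kx \bmod \pi)_{k \ge 1}$ is equidistributed on $[0,\pi]$ with respect to normalized Lebesgue measure, so for any Riemann-integrable $\pi$-periodic function $F$ we have
\[
\frac{1}{n} \sum_{k=1}^{n} F(kx) \longrightarrow \frac{1}{\pi} \int_0^{\pi} F(t)\, dt.
\]

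\textbf{Diagonal case.} First I would use $\sgn(\sin(kx)) \sin(kx) = |\sin(kx)|$ to rewrite
\[
\am^{(n)}(x,x) = \sum_{k=1}^{n} |\sin(kx)|.
\]
The function $F(t) = |\sin t|$ has period $\pi$, so equidistribution gives
\[
\frac{1}{n}\am^{(n)}(x,x) \longrightarrow \frac{1}{\pi}\int_0^{\pi} |\sin t|\, dt = \frac{2}{\pi},
\]
which is the first asserted asymptotic.

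\textbf{Off-diagonal case.} The key observation is the triple-angle identity $\sin(3\theta) = \sin\theta\,(3 - 4\sin^2\theta)$, which gives
\[
\sgn(\sin(kx)) \sin(3kx) = |\sin(kx)|\,\bigl(3 - 4\sin^2(kx)\bigr).
\]
Thus $\am^{(n)}(x,3x) = \sum_{k=1}^{n} F(kx)$ for the $\pi$-periodic function $F(t) = |\sin t|(3 - 4\sin^2 t)$, and equidistribution gives
\[
\frac{1}{n}\am^{(n)}(x,3x) \longrightarrow \frac{1}{\pi}\int_0^{\pi} |\sin t|\bigl(3 - 4\sin^2 t\bigr)\,dt = \frac{1}{\pi}\left(3\cdot 2 - 4\cdot \frac{4}{3}\right) = \frac{2}{3\pi}.
\]

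\textbf{Remarks.} The hypothesis $0 < x < \pi/3$ serves only to ensure that both evaluation points $x$ and $3x$ lie in the domain $[0,\pi]$; the irrationality of $x/\pi$ is what makes Weyl's theorem apply. There is no real obstacle here: both statements follow immediately once the triple-angle identity is used to absorb the sign into a periodic bounded function. If quantitative rates were desired, one could replace qualitative equidistribution by an explicit Erd\H{o}s--Tur\'an / Koksma inequality and obtain error terms depending on the Diophantine type of $x/\pi$, but the $o(n)$ error in the statement requires only the qualitative version.
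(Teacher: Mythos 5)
Your proposal is correct and follows essentially the same route as the paper: both reduce the claim to Weyl equidistribution of $(kx)$ modulo the period and then evaluate the resulting mean value, which in the off-diagonal case equals $\frac{1}{2\pi}\int_0^{2\pi}\sgn(\sin t)\sin(3t)\,dt = \frac{2}{3\pi}$. The only cosmetic difference is that you absorb the sign via the triple-angle identity to get a single $\pi$-periodic integrand, whereas the paper phrases the same computation geometrically via equidistribution on the line $y=3x$ in $[0,2\pi]^2$.
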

\begin{proof} We use an argument from \cite{stef}. We start by noting that if $x/\pi$ is irrational, then sequence $ (kx \mod 2\pi)_{k=1}^{\infty}$  is uniformly distributed on $[0, 2\pi]$. As a first implication, we obtain that
$$ \lim_{n \rightarrow \infty} \frac{1}{n} \sum_{k=1}^{n} \left|\sin{(kx)}\right| = \frac{1}{2\pi} \int_0^{2\pi} \left|\sin{(kx)}\right| dx = \frac{2}{\pi}.$$
This implies that the points $(kx, k (3x)) \in \mathbb{T}^2$ are uniformly distributed on the line shown in Figure \ref{fig:square}.

\begin{center}
\begin{figure}[h!]
\includegraphics[width=0.3\textwidth]{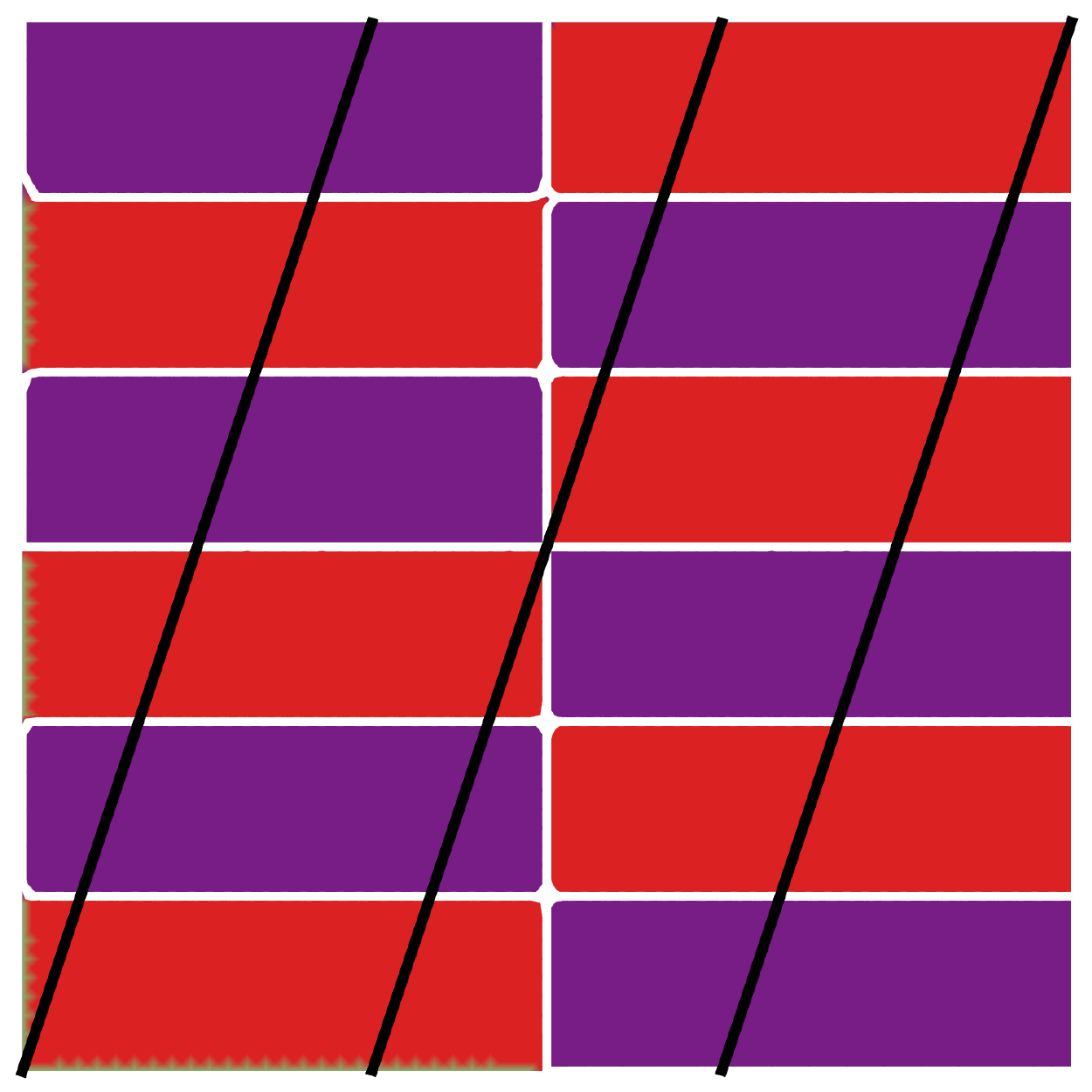}
\label{fig:square}
\caption{The function $\sgn( \sin(x) \sin(3y))$ on $[0,2\pi]^2$ (red $=1$, purple $=-1$) together with the line $y=3x$ on $[0,2\pi]^2$.}
\end{figure}
\end{center}

The line has a broken symmetry: its middle third segment spends two thirds of its time in red regions and only a third in purple regions from which we expect strong spooky action at a distance. Periodicity allows us to compute
$$ \lim_{n \rightarrow \infty} \frac{1}{n} \am^{(n)}(x,3x) =  \frac{1}{2\pi} \int_0^{2\pi} \sgn(\sin{(x)}) \sin{(3x)} dx = \frac{2}{3\pi}$$
from which the result follows. 
\end{proof}

\subsection{Zonal harmonics on $\mathbb{S}^2$}
The sphere, just as $\mathbb{T}$, has eigenvalues with large multiplicities. In particular, there are many possible basis and it is known \cite{van} that a `typical' basis is comprised of eigenfunctions whose $L^{\infty}-$norm grows only like a power of the logarithm of the frequency. However, in stark contrast,
the standard basis of eigenfunctions exhibits maximal eigenfunction growth and
\begin{equation} \label{eq:s2}
 \|\phi_k\|_{L^{\infty}} \lesssim \lambda_k^{\frac{1}{4}}
 \end{equation}
is attained for an infinite number of eigenfunctions. This case is, naturally, very well understood and we will rephrase it in terms of our framework. 
The complete set of eigenfunctions is given by
$$ \cos(m \phi) \cdot P_{l}^m(\cos{\theta}) \quad \mbox{and} \quad  \sin(m \phi) \cdot P_{l}^m(\cos{\theta})$$
where the eigenvalue is $\lambda = l(l+1)$, the value of $l \in \mathbb{N}_{\geq 0}$ and $m = 0,1,2,\dots, l$.
Here, $P_{l}^m$ are the associated Legendre polynomials. 
The extremal eigenfunctions with maximal growth are the zonal spherical harmonics: in the case of $\mathbb{S}^2$, the north
and south pole corresponds to $\cos{\theta} \in \left\{-1,1\right\}$. We easily see from the explicit formula of the eigenfunction that
in these points necessarily
$$ P_l^m = 0 \qquad \mbox{as soon as}~m \geq 1.$$
This reduces the analysis to the case $m=0$. Then
$$ P_l (\cos{\theta}) = P_l^0(\cos{\theta}) \quad \mbox{is just a classical Legendre Polynomial}$$
and we can set all the other signs as we want. We have
$$ P_l^0(\cos{0}) = 1.$$
The proper $L^2-$normalization is given by
$$ \phi_k(\theta) = \sqrt{k + \frac{1}{2}} \cdot P_k(\cos{\theta})$$
since, with that normalization,
$$ \int_0^{\pi} \phi_k(\theta)^2 \sin{(\theta)} d\theta = 1.$$
We also note that
$$ \left\| \phi_k \right\|_{L^{\infty}}= \phi_k(0) =   \sqrt{k + \frac{1}{2}}$$
while $-\Delta \phi_k = k(k+1) \phi_k$. The bound \eqref{eq:s2} is attained for all these eigenfunctions. Using $x$ to denote the
north pole, we see in particular that summing over the first $n$ eigenfunctions, the value of the eigenfunction will be usually 0
in the north pole and we only have a contribution for each individual eigenvalue. Therefore
$$\am^{(n)}(\mbox{north pole}, \mbox{north pole}) \sim \sum_{k=1}^{\sqrt{n}}  \sqrt{k + \frac{1}{2}} \sim n^{3/4}$$
which exactly saturates the universal lower bound for $\am$ on the diagonal \eqref{eq:univlower}. Another explicit computation, following from
$ P_l(\cos{0}) = (-1)^k,$
is
$$ \left| \am^{(n)}(\mbox{north pole}, \mbox{south pole}) \right| \sim  \left| \sum_{k=1}^{\sqrt{n}} (-1)^k \sqrt{k + \frac12}\right| \sim n^{1/4}.$$
It remains to understand the behavior in a generic point. Since $\am^{(n)}(\mbox{north pole}, y)$ is going to be a radial function, the value in
$y = (\theta, \phi)$ only depends on $\theta$ and
$$ \am^{(n)}(\mbox{north pole}, \theta) = \sum_{k=0}^n \sqrt{k + \frac12} \cdot P_k(\cos{\theta}).$$

\section{Proof of Theorem 2}
\begin{proof} We introduce, after possibly replacing without loss of generality $\phi_{n+1}$ by $-\phi_{n+1}$, the (not necessarily unique) point $z \in M$ so that
$$ \phi_{n+1}(z) = \| \phi_{n+1}\|_{L^{\infty}}.$$
Our goal will be to prove the desired inequality for $|\phi_{n+1}(z)|$ thus establishing it for $\| \phi_{n+1}\|_{L^{\infty}}$.
Since $\phi_{n+1}$ is an eigenfunction of the Laplacian, it's behavior under the heat equation is explicit and we can deduce that, for all $t > 0$,
$$ e^{-\lambda_{n+1} t} \phi_{n+1}(x) = \int_M p_t(x,y) \phi_{n+1}(y) dx.$$
The first part of our proof will be concerned with showing that the average of $\am(x,y)$ over $y$ at most a wavelength away from $x$ is locally comparable to $\am(x,x)$. For this, we recall the inequality (from the Proof of Theorem 1)
\begin{align*}
 \int_{M} p_t(x,y) \am^{(n)}(x,y) dy &= \sum_{k=1}^{n} \sgn( \phi_k(x)) \int_M p_t(x,y) \phi_k(y) dx \\
 &= \sum_{k=1}^{n} \sgn( \phi_k(x)) e^{-\lambda_k t} \phi_k(x) \\
 &=  \sum_{k=1}^{n}e^{-\lambda_k t}  |\phi_k(x)| 
\geq e^{-\lambda_n t} \am(x,x).
 \end{align*}
 The same argument also implies that
 $$  \int_{M} p_t(x,y) \am^{(n)}(x,y) dy  \leq \am(x,x).$$
We set $t =  \alpha/\lambda_n$ for a constant $0 < \alpha \ll 1$ to be determined. Then
 \begin{equation} \label{eq:ad}
  e^{-\alpha} \am(x,x) \leq \int_{M} p_t(x,y) \am^{(n)}(x,y) dy  \leq \am(x,x).
  \end{equation}
We will now evaluate the average of the product of $\am(z,y)$ and $\phi_{n+1}(y)$ weighted by the heat kernel centered at $z$. The
integral can be rewritten as
\begin{align*}
  \int_{M} p_t(z,y) \am(z,y) \phi_{n+1}(y) dy &=   \phi_{n+1}(z)   \int_{M} p_t(z,y) \am(z,y)  dy \\
 &+  \int_{M} p_t(z,y) \am(z,y) (\phi_{n+1}(y) - \phi_{n+1}(z))dy.
     \end{align*}
Using \eqref{eq:ad} and $\phi_{n+1}(z) = \| \phi_{n+1}\|_{L^{\infty}}$,
\begin{align*}
  \int_{M} p_t(z,y) \am(z,y) \phi_{n+1}(y) dy &\geq   \| \phi_{n+1}\|_{L^{\infty}}  e^{-\alpha} \am(z,z) \\
 &+  \int_{M} p_t(z,y) \am(z,y) (\phi_{n+1}(y) - \phi_{n+1}(z))dy.
     \end{align*}
As for analyzing the second integral: $p_t(\cdot, \cdot) \geq 0$ is always nonnegative and,
since $\phi_{n+1}$ assumes its maximum in $z$, we also have 
$$\phi_{n+1}(y) - \phi_{n+1}(z) \leq 0$$ 
and thus
$$ \forall~y \in M \qquad \qquad p_t(z,y) (\phi_{n+1}(y) - \phi_{n+1}(z)) \leq 0.$$
$\am(z,y)$ may assume either sign: since $y$ is pretty close to $z$ and $\am(z,z) > 0$, it stands to reason that the inequality
$$ \am(z,y) \leq \max_{w \in M} \am(z,w)$$
is not actually too lossy (see also \S 2.4). Combining all these estimates, we arrive at the lower bound

\begin{align*}
  \int_{M} p_t(z,y) \am(z,y) \phi_{n+1}(y) dy &\geq \| \phi_{n+1}\|_{L^{\infty}}  e^{-\alpha} \am(z,z) \\
 &+  \left(\max_{y \in M} \am(z,y) \right) \int_{M} p_t(z,y)(\phi_{n+1}(y) - \phi_{n+1}(z))dy.
     \end{align*}

Using once more than $\phi_{n+1}$ is an eigenfunction, we have
$$ \int_{M} p_t(z,y) \phi_{n+1}(y) dy = e^{-\lambda_{n+1} t} \phi_{n+1}(z) = e^{-\lambda_{n+1}t} \|\phi_{n+1}\|_{L^{\infty}}.$$
Finally, the heat kernel preserves constants and
$$  \int_{M} p_t(z,y)\phi_{n+1}(z)dy = \phi_{n+1}(z) = \|\phi_{n+1}\|_{L^{\infty}}.$$
Collecting these estimates, we arrive at

\begin{align*}
  \int_{M} p_t(z,y) \am(z,y) \phi_{n+1}(y) dy &\geq   \| \phi_{n+1}\|_{L^{\infty}}  e^{-\alpha} \am(z,z) \\
 &-  \left(\max_{y \in M} \am(z,y) \right) (1 - e^{-\alpha}) \|\phi_{n+1}\|_{L^{\infty}}.
     \end{align*}
Altogether, we obtain the lower bound
 $$ \int_{M} p_t(z,y) \am(z,y) \phi_{n+1}(y) dy \geq  X \cdot\| \phi_{n+1}\|_{L^{\infty}}$$
 where
 $$X =  e^{-\alpha} \left( \am(z,z) +  \max_{y \in M} \am(z,y) \right) -  \max_{y \in M} \am(z,y).$$
Naturally, for $\alpha$ small, this quantity is simply very close to $\am(z,z)$: this simply follows from continuity of
all the involved quantities. Indeed, for any $g \in C(M)$, 
$$ \lim_{t \rightarrow 0} \int_{M} p_t(x,y) g(y) dy = g(x).$$
The goal is to show that $\alpha$ need not be tremendously small for us to achieve the same effect up to constants.
We introduce the constant $c$ implicitly by
$$ \max_{y \in M} \am(z,y) = c \am(z,z).$$
Using the elementary inequality
$$ \forall~c \geq 1 \qquad e^{-\frac{1}{4c}} \geq  1 - \frac{1}{2c + 2},$$
we see that setting $\alpha = 1/(4c)$ implies that
$$ e^{-\alpha} \left(1 + c\right) - c \geq \frac{1}{2}.$$
 In particular, the choice $\alpha = 1/(4c)$ also implies
  $$ \int_{M} p_t(z,y) \am(z,y) \phi_{n+1}(y) dy \geq  \frac{1}{2} \am(z,z) \cdot\| \phi_{n+1}\|_{L^{\infty}}.$$
Standard heat kernel bounds imply
$$ \max_{w \in M} p_t(z,w) \lesssim t^{-\frac{d}{2}} \lesssim_c n$$
allowing us to bound
  $$ \int_{M} \frac{p_t(z,y)}{\max_{w \in M} p_t(z,w)} \am(z,y) \phi_{n+1}(y) dy \gtrsim_c  \frac{\am(z,z)}{2n}  \cdot\| \phi_{n+1}\|_{L^{\infty}}.$$
Orthogonality of eigenfunctions implies
$$ \int_{M} \am(z,y) \phi_{n+1}(y) dy = 0$$
and thus
$$\left|  \int_{M} \left(1 - \frac{p_t(z,y)}{\max_{w \in M} p_t(z,w)} \right) \am(z,y) \phi_{n+1}(y) dy \right| \gtrsim_c  \frac{\am(z,z)}{2n}  \cdot\| \phi_{n+1}\|_{L^{\infty}}.$$
This is exactly the desired statement.
\end{proof}


\begin{thebibliography}{10}

\bibitem{aurich} R. Aurich, A. B\"acker, R. Schubert and M. Taglieber, Maximum norms of chaotic quantum eigenstates and random waves. Phys. D 129 (1999), no. 1-2, 1--14


\bibitem{ava}  G. Avakumovic, \"Uber die Eigenfunktionen auf geschlossenen Riemannschen Mannigfaltigkeiten, Math. Z. 65 (1956), p. 327--344.

\bibitem{ber} P. B\'erard, On the wave equation on a compact Riemannian manifold without conjugate points.
Math. Z. 155 (1977), no. 3, 249--276.

\bibitem{berry} M. V. Berry, Regular and irregular semiclassical wavefunctions. Journal of Physics A: Mathematical and General, 10 (1977), 2083.

\bibitem{bour} J. Bourgain, Eigenfunction bounds for the Laplacian on the n-torus.
Internat. Math. Res. Notices 1993, no. 3, 61--66.

\bibitem{bour2}  Bourgain, J., Eigenfunction bounds for compact manifolds with integrable geodesic flows,
IHES preprint, 1993.

\bibitem{gal} J. Galkowski and J. Toth, 
Eigenfunction scarring and improvements in $L^{\infty}$ bounds. 
Anal. PDE 11 (2018), no. 3, 801--812.

\bibitem{stef} F. Goncalves, D. Oliveira e Silva and S. Steinerberger, A Universality Law For Sign Correlations of Eigenfunctions of Differential Operators, J.  Spectral Theory 11 (2021), p. 661--676.

\bibitem{grieser} D.  Grieser, Uniform bounds for eigenfunctions of the Laplacian on manifolds with boundary.
Comm. P.D.E. 27 (7-8), 1283-1299

\bibitem{ha}A. Hassell and M. Tacy, Improvement of eigenfunction estimates on manifolds of nonpositive curvature. 
Forum Math. 27 (2015), no. 3, 1435--1451.

\bibitem{hezari} H. Hezari and G. Riviere, 
$L^p$ norms, nodal sets, and quantum ergodicity. 
Adv. Math. 290 (2016), 938--966.

\bibitem{h}  L. H\"ormander, The spectral function of an elliptic operator, Acta Math. 121 (1968), 193--218.

\bibitem{levitan} B. Levitan, On the asymptotic behavior of the spectral function of a self-adjoint differential equation of second order. Isv. Akad. Nauk SSSR Ser. Mat. 16 (1952), p. 325--352.

\bibitem{rudnick} Z. Rudnick and P. Sarnak,  The behavior of eigenstates of arithmetic hyperbolic manifolds,
Commun. Math. Phys., 161 (1994), pp. 195--213.

\bibitem{sogge0} C. Sogge, , Concerning the $L^p$ norm of spectral clusters for second-order elliptic operators on compact manifolds, J. Funct. Anal. 77 (1988) 123--138.

\bibitem{sogge2002} C.  Sogge, Eigenfunction and Bochner Riesz estimates on manifolds with boundary.
Mathematical Research Letter 9 (2002), 205--216.

\bibitem{sogge00} C. Sogge, Kakeya-Nikodym averages and $L^p$-norms of eigenfunctions, Tohoku Math. J. 63 (2011) 519--538.

\bibitem{sogge1} C. Sogge, 
Localized Lp-estimates of eigenfunctions: a note on an article of Hezari and Riviere. 
Adv. Math. 289 (2016), 384--396.

\bibitem{sogge} C. Sogge and S. Zelditch,
Riemannian manifolds with maximal eigenfunction growth. 
Duke Math. J. 114 (2002), no. 3, 387--437.

\bibitem{soggez}  C. Sogge and S. Zelditch, Focal points and sup-norms of eigenfunctions,
Rev. Mat. Iberoam. 32 (2016), no. 3, 971--994.

\bibitem{soggez2} C. Sogge and S. Zelditch, Focal points and sup-norms of eigenfunctions II: the two-dimensional case. 
Rev. Mat. Iberoam. 32 (2016), no. 3, 995--999.

\bibitem{toth} J. Toth and S. Zelditch, 
Riemannian manifolds with uniformly bounded eigenfunctions. 
Duke Math. J. 111 (2002), no. 1, 97--132.

\bibitem{van} J. VanderKam, $L^{\infty}$ norms and quantum ergodicity on the sphere,
Internat. Math. Res. Notices 1997, no. 7, 329--347.

\bibitem{xi} Y. Xi and C. Zhang, 
Improved critical eigenfunction restriction estimates on Riemannian surfaces with nonpositive curvature. 
Comm. Math. Phys. 350 (2017), no. 3, 1299--1325.

\end{thebibliography}
\end{document}